\documentclass[12pt,reqno]{amsart}

\usepackage{tikz}
\usepackage{xcolor}

\usepackage{latexsym,amsmath,amssymb,epic}
\usepackage{latexsym,amssymb,epic}
\usepackage{amsmath,amsthm}
\usepackage{color}

\usepackage[left=3.75cm,right=3.75cm]{geometry}
\usepackage{etoolbox}
\usepackage{enumitem}
\usepackage[noadjust]{cite}
\usepackage[pagebackref]{hyperref}

\hypersetup{
	colorlinks=true, 
	linktoc=all, 
	linkcolor=blue} 

\allowdisplaybreaks[4]

\setlength{\parskip}{2pt}

\numberwithin{equation}{section}

\theoremstyle{theorem}
\newtheorem{theorem}{Theorem}
\newtheorem*{theorem*}{Theorem}
\newtheorem{corollary}[theorem]{Corollary}
\newtheorem{lemma}[theorem]{Lemma}

\theoremstyle{definition}

\newtheorem{example}{Example}
\newtheorem*{example*}{Example}

\newtheorem*{conjecture*}{Conjecture}
\newtheorem{remark}{Remark}
\newtheorem*{remark*}{Remark}
\newtheorem*{remarks*}{Remarks}

\makeatletter
\patchcmd{\section}{\scshape}{\bfseries\boldmath}{}{}
\patchcmd{\subsection}{\bfseries}{\bfseries\boldmath}{}{}
\renewcommand{\@secnumfont}{\bfseries}
\makeatother



\newcommand{\ord}{\operatorname{ord}}



\begin{document}
	
\title[Partitions into odd parts with designated summands]{An infinite family of internal congruences modulo powers of $2$ for partitions into odd parts with designated summands}
	
\author[S. Chern]{Shane Chern}
\address[S. Chern]{Department of Mathematics and Statistics, Dalhousie University, Halifax, NS, B3H 4R2, Canada}
\email{chenxiaohang92@gmail.com}

\author[J. A. Sellers]{James A. Sellers}
\address[J. A. Sellers]{Department of Mathematics and Statistics, University of Minnesota Duluth, Duluth, MN 55812, USA}
\email{jsellers@d.umn.edu}
	
\subjclass[2010]{11P83, 05A17}
	
\keywords{partitions, congruences, designated summands, generating functions, dissections, modular forms}
	
\maketitle

\begin{abstract}
In 2002, Andrews, Lewis, and Lovejoy introduced the combinatorial objects which they called \emph{partitions with designated summands}.  These are built by taking unrestricted integer partitions and designating exactly one of each occurrence of a part.  In that same work, Andrews, Lewis, and Lovejoy also studied such partitions wherein all parts must be odd, and they denoted the number of such partitions of size $n$ by the function $PDO(n)$.  Since then, numerous authors have proven a variety of divisibility properties satisfied by $PDO(n)$. Recently, the second author proved the following internal congruences satisfied by $PDO(n)$:  \textit{For all $n\geq 0$},
\begin{align*}
PDO(4n) &\equiv PDO(n) \pmod{4},\\
PDO(16n) &\equiv PDO(4n) \pmod{8}.  
\end{align*}
In this work, we significantly extend these internal congruence results by proving the following new infinite family of congruences:  
\textit{For all $k\geq 0$ and all $n\geq 0$}, 
$$PDO(2^{2k+3}n) \equiv PDO(2^{2k+1}n) \pmod{2^{2k+3}}.$$ 
We utilize several classical tools to prove this family, including generating function dissections via the unitizing operator of degree two, various modular relations and recurrences involving a Hauptmodul on the classical modular curve $X_0(6)$, and an induction argument which provides the final step in proving the necessary divisibilities. It is notable that the construction of each $2$-dissection slice of our generating function bears an entirely different nature to those studied in the past literature.
\end{abstract}

\section{Introduction}  
In 2002, Andrews, Lewis, and Lovejoy \cite{ALL} introduced the combinatorial objects which they called {\it partitions with designated summands}.  These are built by taking unrestricted integer partitions and designating exactly one of each occurrence of a part.  For example, there are ten partitions with designated summands of size $4$:  
\begin{gather*}
    4',\ \ \  3'+1',\ \ \  2'+2,\ \ \  2+2',\ \ \  2'+1'+1,\ \ \  2'+1+1',\\
    1'+1+1+1,\ \ \  1+1'+1+1,\ \ \  1+1+1'+1,\ \ \  1+1+1+1'.
\end{gather*}
Andrews, Lewis, and Lovejoy denoted the number of partitions with designated summands of size $n$ by the function $PD(n)$.  Hence, using this notation and the example above, we know $PD(4)=10$.

In the same paper, Andrews, Lewis, and Lovejoy \cite{ALL} also considered the restricted partitions with designated summands wherein all parts must be odd, and they denoted the corresponding enumeration function by $PDO(n)$.  Thus, from the example above, we see that $PDO(4)=5$, where we have counted the following five objects:  
$$
3'+1',\ \ \  1'+1+1+1,\ \ \  1+1'+1+1,\ \ \  1+1+1'+1,\ \ \  1+1+1+1'.
$$
Andrews, Lewis, and Lovejoy noted in \cite[eq.~(1.6)]{ALL} that the generating function for $PDO(n)$ is given by 
\begin{equation}
    \label{genfn_main}
    \sum_{n=0}^{\infty} PDO(n)q^n = \frac{E(q^4)E(q^6)^2}{E(q)E(q^3)E(q^{12})},
\end{equation}
where
\begin{align*}
    E(q) = (q;q)_\infty := (1-q)(1-q^{2})(1-q^{3})(1-q^{4})\cdots
\end{align*}
is the usual \emph{$q$-Pochhammer symbol}.  

Beginning with \cite{ALL}, a wide variety of Ramanujan-like congruences have been proven for $PD(n)$ and $PDO(n)$ under different moduli.  See \cite{ACX2018, BK, BO, CJJS, dSS3, dSSk, HBN, NG, Xia} for such work. Recently, Sellers \cite{Sel23} proved a number of arithmetic properties satisfied by $PDO(n)$ modulo powers of $2$ in response to a conjecture of Herden \textit{et.~al.~}\cite{Her}. As part of that work,  Sellers proved the following internal congruences on the way to proving infinite families of divisibility properties satisfied by $PDO(n)$:  
\textit{For all $n\geq 0$}, 
\begin{align}
PDO(4n) &\equiv PDO(n) \pmod{4}, \\
PDO(16n) &\equiv PDO(4n) \pmod{8}. \label{cong_mod_8}  
\end{align}

While searching for such internal congruences computationally, it became clear that the above internal congruences are part of a much larger family.  The ultimate goal of this paper is to prove the existence of the following infinite family:    

\begin{theorem}
\label{mainthm}
For all $k\geq 0$ and all $n\geq 0$, 
\begin{align}
    PDO(2^{2k+3}n) \equiv PDO(2^{2k+1}n) \pmod{2^{2k+3}}.
\end{align}
\end{theorem}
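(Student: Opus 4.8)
The plan is to work entirely at the level of the generating function in~\eqref{genfn_main} and to prove the family by induction on $k$, where the base case $k=0$ is exactly the congruence~\eqref{cong_mod_8} already established by Sellers. For each fixed $k$, I would introduce the generating function for the arithmetic progression $2^{2k+1}n$, namely
\begin{align*}
F_k(q) := \sum_{n=0}^{\infty} PDO\bigl(2^{2k+1}n\bigr)\,q^n,
\end{align*}
so that the desired statement $PDO(2^{2k+3}n)\equiv PDO(2^{2k+1}n)\pmod{2^{2k+3}}$ becomes a congruence relating the coefficients of $F_{k+1}(q)$ to the even-indexed coefficients of $F_k(q)$, modulo $2^{2k+3}$. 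The central mechanism is the passage from $F_k$ to $F_{k+1}$ by extracting a fixed residue class modulo $4$ (equivalently, applying the unitizing/multisection operator of degree two twice), and this is precisely where the Hauptmodul on $X_0(6)$ enters: I expect that each $2$-dissection slice of $F_k$ can be written as an explicit polynomial (or rational expression with controlled denominator) in a suitable Hauptmodul $t$ for $\Gamma_0(6)$, times a fixed eta-quotient prefactor. The abstract's remark that ``the construction of each $2$-dissection slice of our generating function bears an entirely different nature to those studied in the past literature'' signals that the naive ansatz for these slices fails, and that a more delicate representation is needed.

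The key steps, in order, are as follows. First, I would record the exact $2$-dissection of the base generating function~\eqref{genfn_main}, isolating the even and odd parts and re-expressing them through the Hauptmodul $t$ on $X_0(6)$ together with modular relations and recurrences satisfied by $t$ under the Hecke/Atkin--Lehner-type operator that implements the $q\mapsto q^{1/2}$ passage on the relevant progression. Second, I would set up the inductive hypothesis as a \emph{structural} statement, not merely a numerical congruence: I would posit that $F_k(q)$ admits a representation of the form $F_k = \Phi\cdot P_k(t)$ for a fixed eta-quotient $\Phi$ and a polynomial $P_k(t)$ whose coefficients satisfy explicit $2$-adic divisibility conditions that grow with $k$ at exactly the rate $2^{2k+1}$ dictated by the theorem. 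Third, I would apply the degree-two unitizing operator to this representation to obtain a corresponding representation for $F_{k+1}$, using the modular recurrences for $t$ to re-expand the result back into the same polynomial-in-$t$ form. Fourth, tracking the $2$-adic valuations of the coefficients through this re-expansion yields the promoted divisibility, and comparing the even-indexed coefficients of $F_k$ against all coefficients of $F_{k+1}$ gives the stated congruence modulo $2^{2k+3}$.

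The hard part will be the third and fourth steps together: establishing the precise $2$-adic gain in valuation as one passes from level $k$ to level $k+1$. The difficulty is that applying the unitizing operator to $\Phi\cdot P_k(t)$ does not immediately produce something of the form $\Phi\cdot P_{k+1}(t)$ — the dissection mixes the prefactor with the polynomial, and one must invoke the modular relations for $t$ on $X_0(6)$ to absorb the extra eta-factors and return to canonical form. Controlling the valuations of the resulting coefficients, and showing that they improve by exactly two powers of $2$ at each step (so that the modulus advances from $2^{2k+1}$ to $2^{2k+3}$), is where the induction can fail if the slices are chosen carelessly; this is the content of the ``entirely different nature'' remark. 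I would therefore expect the technical heart of the proof to be an explicit lemma giving the action of the dissection operator on the Hauptmodul expansion, together with a careful valuation bookkeeping argument that feeds the induction. A secondary obstacle is identifying the \emph{correct} eta-quotient prefactor $\Phi$ and the correct normalization of $t$ so that the polynomial coefficients are genuinely integral and their valuations behave uniformly in $k$; getting this normalization right at the outset is what makes the induction go through cleanly.
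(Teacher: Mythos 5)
Your overall toolkit (Hauptmodul on $X_0(6)$, degree-two unitizing operator, valuation bookkeeping, induction on $k$) matches the paper's, but two of your structural choices are exactly the ones that fail, and repairing them is the actual content of the paper. First, your ansatz $F_k=\Phi\cdot P_k(t)$ with a \emph{fixed} eta-quotient prefactor $\Phi$ cannot survive the dissection. The paper's multiplier for the $k$-th slice is $\gamma(q)^{2^{k-2}}$ with $\gamma=E(q)^5E(q^2)^5E(q^6)^5/E(q^3)^{15}$: applying $U$ to $\gamma(q^2)^{2^{k-2}}\sum_n PDO(2^{k-1}n)q^n$ forces the factorization $\gamma(q^2)^{2^{k-2}}=\kappa^{2^{k-3}}\gamma(q)^{2^{k-3}}$ with $\kappa=\gamma(q^2)^2/\gamma(q)$, so the prefactor necessarily changes from $\gamma^{2^{k-3}}$ to $\gamma^{2^{k-2}}$ (pairwise distinct for all $k$), and one must control $U\big(\kappa^i\xi^j\big)$ for \emph{unbounded} $i$ (namely $i=2^{k-3}$), not merely $i\in\{0,1\}$ as in the classical Watson and Gordon--Hughes settings. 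That is what the ``entirely different nature'' remark means: it is not that a more delicate fixed prefactor exists, but that no fixed prefactor works.

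Second, and more seriously, your inductive hypothesis --- that the coefficients of $P_k(t)$ carry $2$-adic valuations growing with $k$ --- is false for the individual slices. The paper computes, for instance, $\Lambda_2=3\xi^2-2\xi^3$ and $\Lambda_3=9\xi^4-24\xi^5+16\xi^6$; the leading coefficients are odd for every $k$, so no divisibility accumulates slice by slice. The high divisibility lives only in the \emph{difference}: one must introduce $\Phi_k=\gamma^{2^k}\big(\sum_n PDO(2^{k+2}n)q^n-\sum_n PDO(2^{k}n)q^n\big)=\Lambda_{k+2}-(\gamma^6)^{2^{k-3}}\Lambda_k$ (which in turn requires the separate modular identity expressing $\gamma^6$ as a polynomial in $\xi$) and run the induction on the $\xi$-expansion of $\Phi_k$, proving $\nu\big(F_{2K+1}(\tau_{2K+1}+M)\big)\ge 2K+M+2$ for $M\ge 1$ via precise minimal-degree and valuation estimates for the $U\big(\kappa^{2^k}\xi^j\big)$. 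Even then the leading coefficient $F_{2K+1}(\tau_{2K+1})$ escapes the recurrence-based induction and needs a separate argument: the paper extracts it from the vanishing of the constant term of $\Phi_{2K+1}$. As written, your plan has no mechanism for producing divisibility of the difference from non-divisible slices, so the induction cannot close.
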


It should be noted that replacing $n$ by $2n$ in the above immediately yields the following corollary:  

\begin{corollary}
 For all $k\geq 0$ and all $n\geq 0$, 
 \begin{align}
     PDO(2^{2k+4}n) \equiv PDO(2^{2k+2}n) \pmod{2^{2k+3}}.
 \end{align}
\end{corollary}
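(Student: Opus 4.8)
My plan is to recast Theorem~\ref{mainthm} as a statement about $2$-adic convergence of an iterated operator and then to prove it by induction on $k$. Let $U$ be the unitizing operator of degree two, so that $U\big(\sum_n a_n q^n\big)=\sum_n a_{2n}q^n$ and hence $U^2\big(\sum_n a_n q^n\big)=\sum_n a_{4n}q^n$. Writing $F(q)=\sum_{n\ge0}PDO(n)q^n$ and $L_m:=U^mF=\sum_{n\ge0}PDO(2^mn)q^n$, and setting $G_k:=L_{2k+1}$, we have $G_{k+1}=U^2G_k$, so the theorem is precisely the coefficientwise congruence
$$G_{k+1}\equiv G_k\pmod{2^{2k+3}}\qquad(k\ge0).$$
In other words, the sequence $(G_k)$ converges $2$-adically, with the modulus gaining a factor of $4$ at each step.

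First I would install the modular-forms machinery. With $q=e^{2\pi i\tau}$ the fractional $q$-powers cancel and \eqref{genfn_main} exhibits $F$ as the weight-zero eta quotient
$$F=\frac{\eta(4\tau)\,\eta(6\tau)^2}{\eta(\tau)\,\eta(3\tau)\,\eta(12\tau)},$$
a modular function on $X_0(12)$ with poles only at the cusps. Using the standard level-change behaviour of $U$, I would check that the odd slices $G_k=L_{2k+1}$ are all modular functions on the genus-zero curve $X_0(6)$ (the even slices passing through $X_0(3)$); this is the structural reason that a Hauptmodul $t$ for $X_0(6)$ governs the problem. I would then fix such a $t$ explicitly as an eta quotient and exhibit a finitely generated $\mathbb Z$-module $\Omega$ of modular functions, expressed as polynomials in $t$, that contains every $G_k$ and is stable under $U^2$. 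Deriving the explicit modular relations that express $U(t^j)$, and therefore $U^2(t^j)$, as integer-coefficient polynomials in $t$ encodes the action of $U^2$ on $\Omega$ as an integer matrix in the basis of powers of $t$. The decisive feature to isolate is the $2$-adic behaviour of this matrix: after normalizing $t$ suitably, one should find that $U^2$ preserves the augmentation submodule $\Omega_0\subset\Omega$ of functions vanishing at the relevant cusp and contracts it, in the sense that $U^2\Omega_0\subseteq4\,\Omega_0$, so that $v_2(U^2h)\ge v_2(h)+2$ for every $h\in\Omega_0$.

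Granting this contraction property, the induction is immediate. Each difference $G_{k}-G_{k-1}$ lies in $\Omega_0$, and
$$G_{k+1}-G_k=U^2\big(G_k-G_{k-1}\big),$$
so one factor of $2^2$ is gained at every step. The base case $G_1\equiv G_0\pmod 8$, that is $PDO(8n)\equiv PDO(2n)\pmod 8$, is a finite computation in the spirit of \eqref{cong_mod_8}; from $v_2(G_k-G_{k-1})\ge2k+1$ the contraction estimate gives $v_2(G_{k+1}-G_k)\ge2k+3$, which is exactly the asserted congruence. Replacing $n$ by $2n$ then yields the corollary.

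The main obstacle is the heart of the second step: proving that $U^2$ contracts $\Omega_0$ by exactly two powers of $2$ at each application, uniformly in $k$. This demands the right Hauptmodul and an adapted $\mathbb Z_{(2)}$-basis of $\Omega_0$ in which the integer matrix of $U^2$ visibly satisfies $U^2\Omega_0\subseteq 4\,\Omega_0$, together with a filtration guaranteeing that the valuation gain survives iteration with no erosion through cancellation. Pinning down these explicit $2$-adic divisibilities of the $U^2$-images of the powers of $t$ — and verifying that the construction of the resulting $2$-dissection slices genuinely closes the recurrence — is the delicate bookkeeping on which the whole argument rests.
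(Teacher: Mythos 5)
Your final step---deducing the corollary from Theorem \ref{mainthm} by replacing $n$ with $2n$---is exactly the paper's one-line proof of this statement, and that part is fine. The problem is that you have elected to reprove Theorem \ref{mainthm} itself, and the argument you sketch has a genuine gap at precisely the point you flag as ``the main obstacle'': the existence of a fixed finitely generated $U^2$-stable $\mathbb{Z}$-module $\Omega$ of modular functions on $X_0(6)$ containing every odd slice $G_k$, together with a uniform contraction $U^2\Omega_0\subseteq 4\,\Omega_0$ on the augmentation part. No such structure is available here. The raw slices $\sum_n PDO(2^m n)q^n$ only become polynomials in the Hauptmodul $\xi$ after multiplication by the multiplier $\gamma(q)^{2^{m-2}}$ (see \eqref{eq:def-L}), and these multipliers are pairwise distinct and grow without bound; consequently the slices have unbounded pole orders, do not all lie in any single finitely generated module, and the iteration step is not a fixed operator $U^2$ but the $k$-dependent map $\Phi_{k-1}\mapsto U\big(\kappa^{2^{k-1}}\Phi_{k-1}\big)$. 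This is exactly the feature the paper singles out in Sect.~\ref{sec:conclusion} as what separates this problem from the Watson and Gordon--Hughes settings, where two multipliers alternate and a fixed-module contraction argument of the kind you describe does work.

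Moreover, the uniform contraction claim fails on the most natural candidate module. From \eqref{eq:k0x1} and \eqref{eq:k0x2} one computes
\begin{align*}
U^2(\xi)=5\,U(\xi)-4\,U(\xi^2)=61\xi-252\xi^2+320\xi^3-128\xi^4,
\end{align*}
and the coefficient $61$ is odd; so $U^2$ does not map $\mathbb{Z}[\xi]$, nor the augmentation ideal generated by $\xi-1$ (note $\xi$ has constant term $1$), into $4$ times itself. The genuine gain of a factor of $4$ per double step comes from a far more delicate interplay: the minimal $\xi$-power $\tau_k$ of $\Phi_k$ grows geometrically (Theorem \ref{th:F-nonvanishing}), the $2$-adic valuations of the coefficients $Z_{i,j}(d_{i,j}+M)$ grow essentially linearly in $M$ (Theorem \ref{th:Z-2-adic}), and even then the leading coefficient $F_{2K+1}(\tau_{2K+1})$ must be handled by a separate constant-term argument, since the recurrence-based estimate alone only yields valuation $\ge 2K+2$ there. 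Without a replacement for your contraction claim that accounts for the growing multipliers, the induction does not close.
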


Notice that the $k=0$ case of this corollary states that, for all $n\geq 0$, 
$$
PDO(16n) \equiv PDO(4n) \pmod{8}
$$
which is \eqref{cong_mod_8} mentioned above. It is also necessary to point out that the above families of congruences are not optimal in several isolated cases, especially when $k$ is small. For instance, we will show in \eqref{eq:2^3-2^5} that
$$
PDO(32n) \equiv PDO(8n) \pmod{64},
$$
which also yields
$$
PDO(64n) \equiv PDO(16n) \pmod{64}.
$$

In order to prove Theorem \ref{mainthm}, we introduce the following auxiliary functions:
\begin{align*}
	\delta=\delta(q)&:=\frac{E(q^4) E(q^6)^2}{E(q) E(q^3) E(q^{12})}, \\ 
	\gamma=\gamma(q)&:= \frac{E(q)^5 E(q^2)^5 E(q^6)^5}{E(q^3)^{15}}, \\ 
	\xi=\xi(q)&:=\frac{E(q^2)^5 E(q^6)}{E(q) E(q^3)^5}, 
\end{align*}
and
\begin{align*} 
	\kappa=\kappa(q):=\frac{\gamma(q^2)^2}{\gamma(q)}.
\end{align*}
Note from \eqref{genfn_main} that
\begin{align*}
	\delta(q) = \sum_{n=0}^\infty PDO(n)q^n.
\end{align*}
We further define for $k\ge 2$,
\begin{align}\label{eq:def-L}
	\Lambda_k = \Lambda_k(q):=\gamma(q)^{2^{k-2}}\sum_{n=0}^\infty PDO(2^{k} n)q^n.
\end{align}
Finally, let $U$ be the \emph{unitizing operator of degree two}, given by
\begin{align*}
	U\left(\sum_n a_n q^n\right):= \sum_n a_{2n} q^n.
\end{align*}
These will allow us to represent each $2$-dissection slice of the generating function of $PDO(n)$, accompanied by a certain multiplier:
\begin{align*}
    \lambda_k \sum_{n=0}^\infty PDO(2^k n)q^n,
\end{align*}
as a polynomial in the Hauptmodul $\xi$ on the classical modular curve $X_0(6)$ of genus $0$. We then complete our analysis on these functions in order to prove Theorem \ref{mainthm}.  

This paper is organized as follows: Sect.~\ref{sec:mod-eqn} is devoted to proving several required modular equations. In particular, one important component of this section, as discussed in Sect.~\ref{sec:zeta}, concerns the representation of the degree two unitizations $\zeta_{i,j}=U\big(\kappa^i \xi^j\big)$ in $\mathbb{Z}[\xi]$ for arbitrary exponents $i$ and $j$, where $\kappa$ is one of the auxiliary functions we introduced and $\xi$ is the aforementioned Hauptmodul. The next two sections will then be devoted to the $2$-adic behavior for these $\zeta_{i,j}$ series. Recall that in Theorem \ref{mainthm}, we are indeed considering internal congruences for $PDO(n)$. Hence, we introduce another family of auxiliary functions in Sect.~\ref{sec:new-aux-fnc}:
\begin{align*}
    \lambda'_k \sum_{n=0}^\infty \big(PDO(2^{k+2} n)-PDO(2^k n)\big)q^n,
\end{align*}
so as to capture this internal nature. Here the new multipliers $\lambda'_k$ are closely related to the original $\lambda_k$. We will then move on to the divisibility properties for the above new family of series, and in particular, we offer our proof of Theorem \ref{mainthm} in Sect.~\ref{sec:2-adic-Phi}. It is notable that the construction of each $2$-dissection slice of our generating function bears an entirely different nature to those studied in the past literature, in the sense that the multipliers $\lambda_k$ are pairwise distinct. This fact makes our $2$-adic analysis far more complicated but in the meantime very unique. We present a discussion of the difference between our machinery and past work in Sect.~\ref{sec:conclusion}.

\section{Modular equations}\label{sec:mod-eqn}

In this section, we collect a number of necessary modular relations that will be utilized in the sequel.

\subsection{Modular relations for $\gamma$ and $\xi$}

Let us begin with the functions $\gamma$ and $\xi$. It is clear that both $\gamma^6$ and $\xi$ are modular functions on the classical modular curve $X_0(6)$.

\begin{theorem}\label{th:gamma6-xi-identity}
	We have
	\begin{align}\label{eq:gamma6-xi-identity}
		\gamma^6 &= 59049 \xi^{10} - 262440 \xi^{11} + 466560 \xi^{12} - 414720 \xi^{13}\notag\\
		&\quad + 184320 \xi^{14} - 32768 \xi^{15} .
	\end{align}
\end{theorem}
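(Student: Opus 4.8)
The plan is to recognize that, once the fractional powers of $q$ are cleared, both $\gamma$ and $\xi$ are honest weight-$0$ eta-quotients:
\[
\gamma(q) = \frac{\eta(\tau)^5\eta(2\tau)^5\eta(6\tau)^5}{\eta(3\tau)^{15}}, \qquad \xi(q) = \frac{\eta(2\tau)^5\eta(6\tau)}{\eta(\tau)\eta(3\tau)^5},
\]
with $q = e^{2\pi i\tau}$; one checks that the net power of $q$ vanishes in each case, so no extra factor is needed. Since the Dedekind eta function has no zeros on the upper half-plane $\mathbb{H}$, both $\gamma^6$ and $\xi$ are holomorphic and nonvanishing on $\mathbb{H}$. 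Combined with the fact, already recorded before the statement, that they are modular functions on $\Gamma_0(6)$, all of their zeros and poles must lie at the four cusps of $X_0(6)$, represented by $c \in \{1,2,3,6\}$ (equivalently $0,\tfrac12,\tfrac13,\infty$).

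I would then read off the two divisors from Ligozat's cusp-order formula. A direct computation gives that $\xi$ has a simple zero at the cusp $\tfrac12$ and a simple pole at the cusp $\tfrac13$, reconfirming that it is a Hauptmodul (degree one onto $\mathbb{P}^1$), while $\gamma^6$ has orders $5, 10, -15, 0$ at the cusps $0, \tfrac12, \tfrac13, \infty$ respectively. In particular, the only pole of $\gamma^6$ sits at the cusp $\tfrac13$, which is exactly the pole of $\xi$, and it has order $15 = 15\cdot\ord_{1/3}(\xi)$. Since $X_0(6)$ has genus $0$ and $\mathbb{C}\big(X_0(6)\big) = \mathbb{C}(\xi)$, any modular function whose poles are concentrated at the pole of $\xi$ with total order $15$ must be a polynomial in $\xi$ of degree $15$.

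At this point one can finish in two equivalent ways. The structural route exploits the zero data: $\gamma^6$ vanishes to order $10$ at the zero of $\xi$ (forcing a factor $\xi^{10}$) and to order $5$ at the cusp $0$, where $\xi$ takes a finite value $\xi_0$; hence $\gamma^6 = A\,\xi^{10}(\xi-\xi_0)^5$ for constants $A,\xi_0$, and comparing the two leading $q$-coefficients at $\infty$ (where $\xi\to 1$ and $\gamma^6\to 1$) pins down $A = -2^{15}$ and $\xi_0 = \tfrac98$, i.e. the clean identity $\gamma^6 = \xi^{10}(9-8\xi)^5$, which expands directly to the claimed formula. The more mechanical route writes $\gamma^6 = \sum_{j=0}^{15} c_j\xi^j$ and determines the $16$ coefficients by matching the $q$-expansions at $\infty$ through order $q^{15}$; linear independence of $1,\xi,\dots,\xi^{15}$ (a nonzero polynomial of degree $\le 15$ in $\xi$ has at most $15$ poles, so it cannot vanish to order $16$ at $\infty$) guarantees a unique solution, which one verifies to be the stated coefficients. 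Either way, the identity of two modular functions follows once finitely many coefficients agree.

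The main obstacle is the cusp analysis: getting the Ligozat bookkeeping right at all four cusps, and, in the structural route, identifying the value $\xi_0 = \tfrac98$ of $\xi$ at the cusp $0$. The latter either requires transforming via the Atkin--Lehner/Fricke involution to expand $\xi$ at $0$, or, more simply, extracting a second normalization from the $q$-expansion at $\infty$. Everything after the divisor computation is routine power-series arithmetic.
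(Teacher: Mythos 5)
Your proposal is correct and follows essentially the same route as the paper: compute the cusp orders of $\gamma^6$ and $\xi$ on $X_0(6)$ (your values match the paper's exactly), note that $\xi$ is a Hauptmodul with its only pole simple at $[\tfrac{1}{3}]_6$ and that $\gamma^6$ has its only pole there of order $15$, and conclude by genus $0$ that $\gamma^6$ is a degree-$15$ polynomial in $\xi$. The only difference is that the paper outsources the determination of the coefficients to Garvan's \texttt{ETA} package, whereas you determine them by hand; your structural observation that the zero divisor forces $\gamma^6 = \xi^{10}(9-8\xi)^5$, which indeed expands to the stated identity, is a nice refinement the paper does not record.
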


\begin{proof}
	We only need to analyze the order of $\gamma^6$ and $\xi$ at each of the cusps (cf.~\cite[p.~18, Theorem 1.65]{Ono2004}):
	\begin{align*}
		\ord_0 \gamma^6 &= 5, & \ord_0 \xi &= 0,\\
		\ord_{1/2} \gamma^6 &= 10, & \ord_{1/2} \xi &= 1,\\
		\ord_{1/3} \gamma^6 &= -15, & \ord_{1/3} \xi &= -1,\\
		\ord_\infty \gamma^6 &= 0, & \ord_\infty \xi &= 0.
	\end{align*}
We see that both functions have a pole at the cusp $[\tfrac{1}{3}]_6$, with the pole for $\xi$ being simple. Since $X_0(6)$ has genus $0$, we can write $\gamma^6$ as a polynomial in $\xi$ of degree $15$, as given above. A more streamlined computer-aided analysis could be achieved automatically by Garvan's \textsf{Maple} package \texttt{ETA} \cite{Gar1999}.
\end{proof}

\subsection{Modular relations for $\kappa$ and $\xi$}\label{sec:zeta}

Our objective in this subsection is as follows:

\begin{theorem}\label{th:kx-xi-poly}
	For any $i,j\ge 0$,
	\begin{align}\label{eq:kx-xi-poly}
		U\big(\kappa^i\xi^j\big)\in \mathbb{Z}[\xi].
	\end{align}
\end{theorem}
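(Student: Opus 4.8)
The plan is to establish \eqref{eq:kx-xi-poly} for all $i,j\ge 0$ by first pinning down the two base cases $U(\kappa^0\xi^j)=U(\xi^j)$ and $U(\kappa\,\xi^j)$, and then propagating to higher powers of $\kappa$ via a polynomial identity expressing $\kappa^2$ in terms of lower data. The key structural observation is that $U$ is linear, so it suffices to treat the monomials $\kappa^i\xi^j$ separately. Since $\xi$ is a Hauptmodul on $X_0(6)$ and $\kappa=\gamma(q^2)^2/\gamma(q)$, both $\xi^j$ and $\kappa\,\xi^j$ are modular functions on a suitable curve, and the image of such a function under $U$ is again a modular function on $X_0(6)$ whose only poles sit at the cusp $[\tfrac13]_6$ (the pole of $\xi$). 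Because $X_0(6)$ has genus $0$, any such function is automatically a polynomial in $\xi$, and the integrality of the coefficients follows from the integrality of the $q$-expansions of $U(\kappa^i\xi^j)$ together with the fact that $\xi=q+O(q^2)$ has an integral, invertible leading coefficient. I would record this integrality principle as a lemma (or invoke \cite{Ono2004, Gar1999}) so that the whole argument reduces to cusp-order bookkeeping.

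First I would treat $i\in\{0,1\}$ directly. For $i=0$ this is the statement that $U(\xi^j)\in\mathbb Z[\xi]$, which is the classical fact that the even part of a power of a Hauptmodul remains a polynomial in that Hauptmodul; I would verify the cusp orders of $U(\xi^j)$ to bound its pole order at $[\tfrac13]_6$ and conclude. For $i=1$ I would first obtain an explicit expression for $\kappa$, $\kappa\xi^j$, or better for the relevant $U$-images, using the relation $\gamma^6=P(\xi)$ from Theorem~\ref{th:gamma6-xi-identity} to control the modular weight/level of $\kappa=\gamma(q^2)^2/\gamma(q)$ and to show $U(\kappa\,\xi^j)$ is holomorphic away from $[\tfrac13]_6$ with a pole of finite order there. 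The genus-zero principle then again forces $U(\kappa\,\xi^j)\in\mathbb Z[\xi]$.

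The inductive step is where I expect the real content to lie: I would look for an algebraic relation of the shape
\begin{align*}
	\kappa^2 = A(\xi) + B(\xi)\,\kappa,
\end{align*}
valid as an identity of modular functions, where $A,B\in\mathbb Z[\xi]$ (or more precisely a relation letting me rewrite $\kappa^i$ for $i\ge 2$ as an $\mathbb Z[\xi]$-linear combination of $1$ and $\kappa$). Such a relation exists because $\kappa$ satisfies a polynomial equation over the function field $\mathbb C(\xi)$ of the genus-zero curve; establishing it with integral coefficients is exactly the delicate point. Granting it, I would argue by induction on $i$: writing $\kappa^i\xi^j = \kappa^{i-2}\xi^j\bigl(A(\xi)+B(\xi)\kappa\bigr)$ reduces the $\kappa$-exponent by one or two, and after applying $U$ (which does not interact with multiplication by functions of $q^2$ inside, but here $A,B$ are functions of $q$, so care is needed) I reduce to the already-handled cases $i\in\{0,1\}$ multiplied by polynomials in $\xi$. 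The subtlety is that $U$ is not multiplicative, so I cannot naively pull $A(\xi)$ and $B(\xi)$ outside; instead I would use the identity $U\!\bigl(f\cdot g(q^2)\bigr)=g\cdot U(f)$ to move the \emph{even} part of any factor outside, and reduce odd parts to the base cases.

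The hard part will be producing the quadratic (or low-degree) relation for $\kappa$ over $\mathbb Z[\xi]$ with the correct integrality and then managing the interplay between $U$ and the non-$q^2$ factors $A(\xi),B(\xi)$; getting the induction to terminate cleanly requires the reduction to peel off exactly the $q^2$-parts so that only the finitely many base images $U(\xi^m)$ and $U(\kappa\,\xi^m)$ survive. Once that bookkeeping is in place, the genus-zero pole-order argument delivers membership in $\mathbb Z[\xi]$ at every stage, completing the proof.
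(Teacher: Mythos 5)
Your overall architecture (explicit base cases for small $\kappa$-exponent, then a quadratic relation to step down higher powers of $\kappa$) points in the right direction, but the two load-bearing ingredients are left unestablished, and one of them is not the relation the argument actually needs. Your inductive step rests on an identity $\kappa^2 = A(\xi)+B(\xi)\,\kappa$ with $A,B\in\mathbb{Z}[\xi(q)]$, i.e.\ on $\kappa$ being \emph{integral} of degree $2$ over $\mathbb{Z}[\xi(q)]$. You correctly note that a quadratic relation exists over the function field $\mathbb{C}(\xi)$ because the relevant extension has degree $2$, but whether its coefficients are polynomials rather than rational functions in $\xi$ depends on the pole divisor of the Galois conjugate of $\kappa$, which you do not control and which the paper never establishes or uses. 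What the paper uses instead is the quadratic satisfied by $\kappa(q)$ and $\kappa(-q)$: their elementary symmetric functions are $\sigma_{\kappa,1}=2U(\kappa)$ and $\sigma_{\kappa,2}=2U(\kappa)^2-U(\kappa^2)$, which are \emph{automatically} in $\mathbb{Z}[\xi]$ once the two explicit identities \eqref{eq:k1x0} and \eqref{eq:k2x0} are certified, and which give the recurrence
\begin{align*}
U\big(\kappa^i\xi^j\big)=\sigma_{\kappa,1}\,U\big(\kappa^{i-1}\xi^j\big)-\sigma_{\kappa,2}\,U\big(\kappa^{i-2}\xi^j\big)
\end{align*}
directly at the level of $U$-images, with the coefficients multiplying $U(\cdot)$ from the outside. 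This sidesteps entirely the difficulty you wrestle with about $U$ failing to commute with multiplication by functions of $q$.

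Second, your base cases $i\in\{0,1\}$ are themselves an infinite family (all $j\ge 0$), and ``verify the cusp orders and conclude'' is not a proof of infinitely many integral identities; the paper disposes of this with a second recurrence, now in $j$, obtained from the same symmetric-function device applied to $\xi(q)$ and $\xi(-q)$ via \eqref{eq:k0x1} and \eqref{eq:k0x2}, so that only five explicit identities need to be verified in total. Finally, a factual slip: $\xi$ has $q$-expansion $1+q+\cdots$, not $q+O(q^2)$ (the paper explicitly uses that its constant term is $1$), so your stated integrality mechanism does not apply as written; it can be repaired by working with $\xi-1$, or avoided altogether, as in the paper, since integrality propagates through the recurrences from the five integral seed identities.
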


We briefly postpone the proof of Theorem \ref{th:kx-xi-poly} in order to complete some necessary analysis.  

\subsubsection{Initial cases}

Let us focus on the following five initial relations:

\begin{theorem}\label{th:kx-xi-initial}
	We have
	\begin{align}
		U\big(\kappa\big) &= 5 \xi^3 - 20 \xi^4 + 16 \xi^5,\label{eq:k1x0}\\
		U\big(\xi\big) &= 5 \xi - 4 \xi^2,\label{eq:k0x1}\\
		U\big(\kappa^2\big) &= -\xi^5 + 50 \xi^6 - 400 \xi^7 + 1120 \xi^8 - 1280 \xi^9 + 512 \xi^{10},\label{eq:k2x0}\\
		U\big(\kappa\xi\big) &= 3 \xi^3 - 18 \xi^4 + 16 \xi^5,\label{eq:k1x1}\\
		U\big(\xi^2\big) &= -9 \xi + 58 \xi^2 - 80 \xi^3 + 32 \xi^4.\label{eq:k0x2}
	\end{align}
\end{theorem}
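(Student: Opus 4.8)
The plan is to establish these five identities by the standard genus-zero method already used in the proof of Theorem \ref{th:gamma6-xi-identity}. Each left-hand side is, after applying $U$, a modular function on $X_0(6)$ (since $U$ sends a modular function on $X_0(12)$ to one on $X_0(6)$, and $\kappa^i\xi^j$ lives on $X_0(12)$), and $\xi$ is a Hauptmodul whose only pole is a simple pole at the cusp $[\tfrac13]_6$. Consequently, once I know that $U(\kappa^i\xi^j)$ is holomorphic away from that cusp and has a pole of order at most $N$ there, it must equal a polynomial in $\xi$ of degree at most $N$ with integer coefficients, and the coefficients are pinned down by matching finitely many terms of the $q$-expansion. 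So the entire task reduces to two subtasks carried out for each of the five cases: first a cusp-order computation to bound the pole order (and confirm holomorphy at the other cusps), and second a finite $q$-series comparison to read off the coefficients.

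First I would compute the orders of $\kappa$ and $\xi$ at each cusp of $X_0(12)$, using $\ord_0\gamma^6=5$, $\ord_{1/2}\gamma^6=10$, $\ord_{1/3}\gamma^6=-15$, $\ord_\infty\gamma^6=0$ from the previous theorem together with the transformation $\kappa=\gamma(q^2)^2/\gamma(q)$ and the scaling behaviour of orders under $q\mapsto q^2$. From these I would track how $U$ acts on the cusp data: the effect of the unitizing operator on the principal part at the pole cusp determines the degree bound $N$ for each of the five products $\kappa^i\xi^j$ with $(i,j)\in\{(1,0),(0,1),(2,0),(1,1),(0,2)\}$. Reading off the right-hand sides above, I expect $N$ to be $5,2,10,5,4$ respectively, matching the stated degrees. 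Since $X_0(6)$ has genus $0$, the space of modular functions with a pole of order at most $N$ only at $[\tfrac13]_6$ and holomorphic elsewhere is spanned by $1,\xi,\dots,\xi^N$, which justifies writing each $U(\kappa^i\xi^j)$ as such a polynomial.

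Having fixed the degree, I would determine the integer coefficients by expanding both sides as $q$-series to sufficiently high order and solving the resulting linear system; because a polynomial of degree $N$ has $N+1$ unknown coefficients, matching $N+1$ coefficients of the $q$-expansion suffices, and the genus-zero argument guarantees the match persists to all orders. In practice I would confirm these expansions (and indeed the whole cusp analysis) using Garvan's \textsf{Maple} package \texttt{ETA}, exactly as suggested for Theorem \ref{th:gamma6-xi-identity}.

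The main obstacle will be the cusp-order bookkeeping for $\kappa$ under the $U$ operator. Since $\kappa$ is built from $\gamma(q^2)$, its natural home is $X_0(12)$ rather than $X_0(6)$, so I must correctly compute $\ord_c\kappa$ at all cusps of $X_0(12)$ and then understand how $U$ collapses the two cusps of $X_0(12)$ lying above each cusp of $X_0(6)$ — in particular how the pole order at $[\tfrac13]$ transforms. Getting the width factors and the half-integral shifts right is precisely where an off-by-one error would produce a wrong degree bound; everything downstream (the finite coefficient match) is routine once $N$ is correct. The appearance of a genuinely negative leading term, as in the $-\xi^5$ of \eqref{eq:k2x0} and the $-9\xi$ of \eqref{eq:k0x2}, is a useful sanity check that the pole structure has been tracked faithfully rather than merely guessed.
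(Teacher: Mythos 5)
Your proposal is correct, and it is essentially the ``standard techniques from the theory of modular cusp analysis'' that the paper explicitly acknowledges as a viable route before choosing a computer-aided one; so the two arguments differ in execution rather than substance. The paper never bounds the pole order of $U\big(\kappa^i\xi^j\big)$ on $X_0(6)$ directly: it runs Smoot's \texttt{RaduRK} implementation of the Radu--Kolberg algorithm to express each $U\big(\kappa^i\xi^j\big)$, times an explicit eta-quotient multiplier, as a polynomial in a different Hauptmodul $t$ on $X_0(12)$ with pole at $[\infty]_{12}$, and then reduces the claimed identity to the vanishing of a finite linear combination of eta-quotients, which is checked with Garvan's \texttt{ETA}. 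Your route stays on $X_0(6)$: use that $U$ carries modular functions on $\Gamma_0(12)$ to modular functions on $\Gamma_0(6)$ (valid since $4\mid 12$), bound the pole order $N$ at $[\tfrac13]_6$ while confirming holomorphy at the other cusps, invoke genus zero to land in the span of $1,\xi,\dots,\xi^N$, and match $N+1$ coefficients of the $q$-expansion (the injectivity of this finite matching, which you assert, does follow from the valence formula since a nonzero $P(\xi)$ of degree $d\le N$ cannot vanish to order $N+1$ at $[\infty]_6$). What your approach buys is transparency and independence from the \texttt{RaduRK} machinery; what it costs is precisely the bookkeeping you flag, namely verifying that $\kappa$ (an eta-quotient of levels dividing $12$) satisfies the Ligozat conditions on $\Gamma_0(12)$ and tracking how $U$ transforms cusp orders, where the width factors invite off-by-one errors. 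Neither point is a gap in the plan, but both must be carried out before the finite coefficient check is legitimate.
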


\begin{proof}
	These results can be shown by standard techniques from the theory of modular cusp analysis. Here we shall turn to a computer-aided proof by applying Smoot's \textsf{Mathematica} implementation \texttt{RaduRK} \cite{Smo2021} of the Radu--Kolberg algorithm \cite{Rad2015}. For example, with the \texttt{RaduRK} package, we can express $U\big(\xi\big)$ as a multiplier times a polynomial in a Hauptmodul $t=t(q)$ on $X_0(12)$ that has a pole only at the cusp $[\infty]_{12}$:
	\begin{align*}
		U\big(\xi\big)\cdot \frac{E(q^3)^{12} E(q^4)^4}{q^4 E(q)^4 E(q^{12})^{12}} = 15 t - t^2 + t^3 + t^4,
	\end{align*}
	where
	\begin{align*}
		t = \frac{E(q^4)^4 E(q^6)^2}{q E(q^2)^2 E(q^{12})^4}.
	\end{align*}
	Now to show the above expression equals $5 \xi - 4 \xi^2$ as claimed in \eqref{eq:k0x1}, we only need to examine that the alleged linear combination of eta-products is identical to $0$, i.e.,
	\begin{align*}
		\big(5 \xi - 4 \xi^2\big)-\big(15 t - t^2 + t^3 + t^4\big) \cdot \frac{q^4 E(q)^4 E(q^{12})^{12}}{E(q^3)^{12} E(q^4)^4} = 0.
	\end{align*}
	This task can be performed readily by Garvan's \textsf{Maple} package \texttt{ETA} \cite{Gar1999}, as pointed out in the proof of Theorem \ref{th:gamma6-xi-identity}. The remaining identities can be proven in the same way.  
\end{proof}

\subsubsection{Recurrences}\label{sec:kappa-xi-rec}

We start by considering two generic formal power series $\alpha=\alpha(q)$ and $\beta=\beta(q)$ such that each of $U\big(\alpha\big)$ and $U\big(\alpha^2\big)$ is representable as a \emph{polynomial} with \emph{rational} (usually \emph{integer}) coefficients in a generic formal power series $\rho=\rho(q)$. Now we define
\begin{align*}
	\sigma_{\alpha,1} &:= \alpha(q)+\alpha(-q)\\
	&\;= 2U\big(\alpha\big),\\[6pt]
	\sigma_{\alpha,2} &:= \alpha(q)\alpha(-q)\\
	&\;= \tfrac{1}{2}\big[\big(\alpha(q)+\alpha(-q)\big)^2-\big(\alpha(q)^2+\alpha(-q)^2\big)\big]\\
	&\; = 2U\big(\alpha\big)^2-U\big(\alpha^2\big).
\end{align*}
Clearly, both $\sigma_{\alpha,1}$ and $\sigma_{\alpha,2}$ are in $\mathbb{Q}[\rho]$. Meanwhile, writing $\alpha_0=\alpha(q)$ and $\alpha_1=\alpha(-q)$, then $X=\alpha_0$ and $\alpha_1$ are the two roots of
\begin{align*}
	\big(X-\alpha(q)\big)\big(X-\alpha(-q)\big) = X^2 - \sigma_{\alpha,1} X+ \sigma_{\alpha,2}.
\end{align*}
That is, for $t=0,1$,
\begin{align*}
	\alpha_t^2 - \sigma_{\alpha,1} \alpha_t + \sigma_{\alpha,2} = 0.
\end{align*}
Let $\beta_0=\beta(q)$ and $\beta_1=\beta(-q)$. Now we see that, for $k\ge 2$,
\begin{align*}
	2U\big(\alpha^k \beta\big) &= \alpha_0^k\beta_0 + \alpha_1^k\beta_1\\
	&= \big(\sigma_{\alpha,1}\alpha_0^{k-1}-\sigma_{\alpha,2}\alpha_0^{k-2}\big)\beta_0 + \big(\sigma_{\alpha,1}\alpha_1^{k-1}-\sigma_{\alpha,2}\alpha_1^{k-2}\big)\beta_1\\
	&= 2\sigma_{\alpha,1} U\big(\alpha^{k-1}\beta\big) - 2\sigma_{\alpha,2} U\big(\alpha^{k-2}\beta\big).
\end{align*}
Namely,
\begin{align*}
	U\big(\alpha^k \beta\big) = \sigma_{\alpha,1} U\big(\alpha^{k-1}\beta\big) - \sigma_{\alpha,2} U\big(\alpha^{k-2}\beta\big).
\end{align*}
Hence, if $U\big(\beta\big)$ and $U\big(\alpha\beta\big)$ are also in $\mathbb{Q}[\rho]$, we obtain recursively that $U\big(\alpha^k\beta\big)\in \mathbb{Q}[\rho]$ for all $k\ge 0$.

Now moving back to our scenario, we define for $i,j\ge 0$,
\begin{align}\label{eq:def-zeta}
	\zeta_{i,j} := U\big(\kappa^i\xi^j\big).
\end{align}

\begin{theorem}
	For any $i\ge 2$ and $j\ge 0$,
	\begin{align}\label{eq:zeta-rec-i}
		\zeta_{i,j} = \big(10 \xi^3 - 40 \xi^4 + 32 \xi^5\big)\cdot \zeta_{i-1,j} - \big(\xi^5\big)\cdot \zeta_{i-2,j}.
	\end{align}
	Also, for any $i\ge 0$ and $j\ge 2$,
	\begin{align}\label{eq:zeta-rec-j}
		\zeta_{i,j} = \big(10 \xi - 8 \xi^2\big)\cdot \zeta_{i,j-1} - \big(9 \xi - 8 \xi^2\big)\cdot \zeta_{i,j-2}.
	\end{align}
\end{theorem}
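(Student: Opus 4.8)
The plan is to apply the general recurrence machinery developed immediately before the statement, specializing the generic series $\alpha$, $\beta$, $\rho$ to our concrete functions. For the first recurrence \eqref{eq:zeta-rec-i}, I would take $\alpha = \kappa$, $\beta = \xi^j$, and $\rho = \xi$. The machinery tells us that
\begin{align*}
	U\big(\kappa^i \xi^j\big) = \sigma_{\kappa,1}\, U\big(\kappa^{i-1}\xi^j\big) - \sigma_{\kappa,2}\, U\big(\kappa^{i-2}\xi^j\big),
\end{align*}
valid for all $i \ge 2$, provided $\sigma_{\kappa,1}$ and $\sigma_{\kappa,2}$ lie in $\mathbb{Q}[\xi]$. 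Thus the entire task reduces to computing these two symmetric functions explicitly as polynomials in $\xi$ and matching them against the stated coefficients $10\xi^3 - 40\xi^4 + 32\xi^5$ and $\xi^5$.

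The computation of $\sigma_{\kappa,1}$ is immediate from the definitions: $\sigma_{\kappa,1} = 2U(\kappa)$, and \eqref{eq:k1x0} gives $U(\kappa) = 5\xi^3 - 20\xi^4 + 16\xi^5$, so $\sigma_{\kappa,1} = 10\xi^3 - 40\xi^4 + 32\xi^5$, exactly the first bracketed coefficient. For the second symmetric function I would use $\sigma_{\kappa,2} = 2U(\kappa)^2 - U(\kappa^2)$. Here $U(\kappa)^2$ is the square of the polynomial from \eqref{eq:k1x0}, and $U(\kappa^2)$ is supplied by \eqref{eq:k2x0}; subtracting, I expect massive cancellation leaving only $\sigma_{\kappa,2} = \xi^5$, matching the second coefficient. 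These two identities, once verified, establish \eqref{eq:zeta-rec-i} directly.

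The second recurrence \eqref{eq:zeta-rec-j} follows by the symmetric choice $\alpha = \xi$, $\beta = \kappa^i$, again with $\rho = \xi$. Now I need $\sigma_{\xi,1} = 2U(\xi)$ and $\sigma_{\xi,2} = 2U(\xi)^2 - U(\xi^2)$. From \eqref{eq:k0x1}, $U(\xi) = 5\xi - 4\xi^2$, so $\sigma_{\xi,1} = 10\xi - 8\xi^2$, matching the first coefficient in \eqref{eq:zeta-rec-j}. For $\sigma_{\xi,2}$ I would square \eqref{eq:k0x1} and subtract \eqref{eq:k0x2}; the quartic and cubic terms should cancel, leaving $\sigma_{\xi,2} = 9\xi - 8\xi^2$ as claimed. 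The only subtlety requiring care is that the recurrence machinery was stated for the pair $U(\alpha)$ and $U(\alpha\beta)$ being the base data; in the $\alpha = \xi$ case the needed base terms $U(\kappa^i)$ and $U(\xi\kappa^i)$ are precisely $\zeta_{i,0}$ and $\zeta_{i,1}$, so no additional hypotheses beyond those already embedded in the framework are needed.

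The main obstacle, such as it is, lies not in any conceptual difficulty but in the bookkeeping of the polynomial arithmetic: correctly squaring the fifth-degree expression for $U(\kappa)$ and confirming that the high-degree terms collapse against $U(\kappa^2)$ to leave a clean monomial $\xi^5$. Because the framework has already reduced everything to a finite polynomial identity in $\mathbb{Z}[\xi]$, there is no analytic or cusp-counting work remaining here; the verification is purely algebraic and can be checked by hand or confirmed with the same \texttt{ETA} routines used earlier. I therefore expect the proof to consist of little more than invoking the recurrence relation from Sect.~\ref{sec:kappa-xi-rec} and substituting the five initial identities from Theorem \ref{th:kx-xi-initial}.
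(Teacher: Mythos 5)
Your proposal is correct and follows essentially the same route as the paper: both specialize the generic recurrence of Sect.~\ref{sec:kappa-xi-rec} with $(\alpha,\beta)=(\kappa,\xi^j)$ and $(\alpha,\beta)=(\xi,\kappa^i)$, then compute $\sigma_{\alpha,1}=2U(\alpha)$ and $\sigma_{\alpha,2}=2U(\alpha)^2-U(\alpha^2)$ from the initial identities of Theorem~\ref{th:kx-xi-initial}, and the polynomial arithmetic does collapse to $\xi^5$ and $9\xi-8\xi^2$ exactly as you predict. The only difference is cosmetic: the paper records the resulting $\sigma$'s without displaying the intermediate squaring-and-cancellation, which you spell out.
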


\begin{remark}
	We may combine the two recurrences \eqref{eq:zeta-rec-i} and \eqref{eq:zeta-rec-j} and derive that for $i,j\ge 2$,
	\begin{align}
		\zeta_{i,j} &= \big(10 \xi - 8 \xi^2\big)\big(10 \xi^3 - 40 \xi^4 + 32 \xi^5\big) \cdot \zeta_{i-1,j-1}\notag\\
		&\quad - \big(9 \xi - 8 \xi^2\big)\big(10 \xi^3 - 40 \xi^4 + 32 \xi^5\big) \cdot \zeta_{i-1,j-2}\notag\\
		&\quad - \big(10 \xi - 8 \xi^2\big)\big(\xi^5\big)\cdot \zeta_{i-2,j-1}\notag\\
		&\quad + \big(9 \xi - 8 \xi^2\big)\big(\xi^5\big)\cdot \zeta_{i-2,j-2}.
	\end{align}
\end{remark}

\begin{proof}
	Let us first choose $(\alpha,\beta,\xi)\mapsto (\kappa,\xi^j,\xi)$ with $j\ge 0$. It is clear from \eqref{eq:k1x0} and \eqref{eq:k2x0} that
	\begin{align*}
		\sigma_{\kappa,1}&= 10 \xi^3 - 40 \xi^4 + 32 \xi^5,\\
		\sigma_{\kappa,2}&= \xi^5.
	\end{align*}
	Hence, \eqref{eq:zeta-rec-i} follows. In the same fashion, we choose $(\alpha,\beta,\xi)\mapsto (\xi,\kappa^i,\xi)$ with $i\ge 0$. By \eqref{eq:k1x0} and \eqref{eq:k2x0}, we compute that
	\begin{align*}
		\sigma_{\xi,1}&= 10 \xi - 8 \xi^2,\\
		\sigma_{\xi,2}&= 9 \xi - 8 \xi^2,
	\end{align*}
	and therefore, \eqref{eq:zeta-rec-j} is true.
\end{proof}

Now we are in a position to complete the proof of Theorem \ref{th:kx-xi-poly}.

\begin{proof}[Proof of Theorem \ref{th:kx-xi-poly}]
	With Theorem \ref{th:kx-xi-initial} in hand, we currently have two sets of initial relations for the recurrence \eqref{eq:zeta-rec-i}. Namely, $\{\zeta_{0,0},\zeta_{1,0}\}$ gives us the representation of each $\zeta_{i,0}$ in terms of $\xi$, while $\{\zeta_{0,1},\zeta_{1,1}\}$ gives us the representation of each $\zeta_{i,1}$. With these recipes, \eqref{eq:zeta-rec-j} further produces the representation of each $\zeta_{i,j}$ in $\mathbb{Z}[\xi]$, thereby confirming \eqref{eq:kx-xi-poly}.
\end{proof}

\subsection{Modular relations for $\Lambda$ and $\xi$}

We begin with a relation connecting $\gamma(q^2)\delta(q)^2$ and $\xi(q)$.

\begin{theorem}\label{th:delta-xi}
	We have
	\begin{align}\label{eq:delta-xi}
		U\big(\gamma(q^2)\delta(q)^2\big) = 3 \xi(q)^2 - 2\xi(q)^3 .
	\end{align}
\end{theorem}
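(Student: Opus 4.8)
The plan is to prove the identity by the same machinery used for Theorem~\ref{th:kx-xi-initial}: rewrite the left-hand side as a single eta-quotient, apply the degree-two unitization via the Radu--Kolberg algorithm, and then verify the resulting closed form against $3\xi^2-2\xi^3$ with the valence formula. First I would record the ``pullout'' property of $U$, namely that $U\big(f(q^2)g(q)\big)=f(q)\,U\big(g(q)\big)$ for any formal power series $f,g$, which follows at once by comparing coefficients since the factor $f(q^2)$ contributes only to even exponents. Applied with $f=\gamma$ and $g=\delta^2$, this gives
\begin{equation*}
	U\big(\gamma(q^2)\delta(q)^2\big)=\gamma(q)\,U\big(\delta(q)^2\big),
\end{equation*}
which explains the shape of the statement and shows that the content of the theorem is the computation of $U\big(\delta^2\big)$ after multiplication by $\gamma$.

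Next I would pass to eta-quotients. Using the definitions of $\gamma$ and $\delta$, a direct computation yields
\begin{equation*}
	\gamma(q^2)\delta(q)^2=\frac{E(q^2)^5 E(q^4)^7 E(q^{12})^3}{E(q)^2 E(q^3)^2 E(q^6)^{11}},
\end{equation*}
a weight-zero modular function on $\Gamma_0(12)$ (the eta-exponents sum to zero). I would then feed this quotient to Smoot's \texttt{RaduRK} implementation to express $U\big(\gamma(q^2)\delta(q)^2\big)$ as an explicit multiplier times a polynomial in a Hauptmodul $t=t(q)$ on $X_0(12)$ whose only pole is at $[\infty]_{12}$, exactly as was done for \eqref{eq:k0x1}. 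Finally, to confirm that this closed form coincides with $3\xi^2-2\xi^3$, I would form the difference, clear it into a single $\mathbb{Z}$-linear combination of eta-quotients, and verify that it vanishes identically using Garvan's \texttt{ETA} package (equivalently, by checking that it is a holomorphic modular function of nonnegative order at every cusp of the genus-zero curve $X_0(6)$, hence constant, and then matching constant terms).

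Conceptually, the identity is forced: the image $U\big(\gamma(q^2)\delta(q)^2\big)$ is a modular function on $X_0(6)$, and a cusp-order analysis in the spirit of Theorem~\ref{th:gamma6-xi-identity} should show it is holomorphic at $[0]_6,[\tfrac12]_6,[\infty]_6$ and has a pole of order exactly $3$ at $[\tfrac13]_6$, where $\xi$ has a simple pole. Since $X_0(6)$ has genus $0$ and $\xi$ is a Hauptmodul, the function must then be a cubic polynomial in $\xi$, and matching the lowest few $q$-coefficients (both sides have constant term $1$) pins it to $3\xi^2-2\xi^3$. I expect the main obstacle to be the bookkeeping for the $U_2$ operator: because $\delta$ is squared, the eta-quotient carries larger exponents and more cusps to track than the single quotients handled in Theorem~\ref{th:kx-xi-initial}, so one must take care that the multiplier produced by \texttt{RaduRK} is correct and that the pole order at $[\tfrac13]_6$ is exactly $3$, so that the resulting polynomial in $\xi$ has the right degree.
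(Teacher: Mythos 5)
Your proposal is correct and follows essentially the same route as the paper, which proves this identity "in a way similar to that for Theorem \ref{th:kx-xi-initial}", i.e., by expressing the unitization via the Radu--Kolberg algorithm (\texttt{RaduRK}) as a multiplier times a polynomial in a Hauptmodul on $X_0(12)$ and then verifying the resulting eta-quotient identity with Garvan's \texttt{ETA} package. Your added details --- the pullout identity $U\big(\gamma(q^2)\delta(q)^2\big)=\gamma(q)\,U\big(\delta(q)^2\big)$, the explicit eta-quotient $\frac{E(q^2)^5E(q^4)^7E(q^{12})^3}{E(q)^2E(q^3)^2E(q^6)^{11}}$, and the cusp-order argument bounding the degree in $\xi$ --- are all consistent with the paper's framework and correctly fill in what the paper leaves implicit.
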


\begin{proof}
	This relation can be shown in a way similar to that for Theorem \ref{th:kx-xi-initial}.
\end{proof}

Now we move to relations for $\Lambda_k$ and $\xi$ for each $k\ge 2$.

\begin{theorem}\label{th:L-xi}
	For any $k\ge 2$,
	\begin{align}\label{eq:Lk-xi-poly}
		\Lambda_k \in \mathbb{Z}[\xi].
	\end{align}
	More precisely, if we write
	\begin{align}\label{eq:Lk-xi}
		\Lambda_k:=\sum_{m} c_k(m)\xi^m,
	\end{align}
	then
	\begin{align}\label{eq:L0-xi}
		\Lambda_2 = 3\xi^2-2\xi^3,
	\end{align}
	and for $k\ge 3$, we recursively have
	\begin{align}\label{eq:L-xi-rec}
		\Lambda_k = \sum_{\ell} c_{k-1}(\ell) \cdot \zeta_{2^{k-3},\ell},
	\end{align}
	where $\zeta_{2^{k-3},\ell}$ is given by \eqref{eq:def-zeta}.
\end{theorem}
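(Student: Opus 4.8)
The plan is to argue by induction on $k$, with the entire inductive step resting on a single ``master recurrence'' for the unitizing operator. The key observation is that the formula \eqref{eq:L-xi-rec} claimed in the theorem is nothing but the coordinate form of the operator identity
\begin{align*}
\Lambda_k = U\big(\kappa^{2^{k-3}}\Lambda_{k-1}\big), \qquad k \ge 3,
\end{align*}
read off after expanding $\Lambda_{k-1} = \sum_\ell c_{k-1}(\ell)\xi^\ell$ and using linearity of $U$ together with the definition $\zeta_{i,j} = U\big(\kappa^i\xi^j\big)$ from \eqref{eq:def-zeta}. So the first thing I would do is establish this recurrence; everything else is bookkeeping.

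To prove the recurrence I would compute $\kappa^{2^{k-3}}\Lambda_{k-1}$ directly from the definitions. Since $\kappa = \gamma(q^2)^2/\gamma(q)$, we have $\kappa^{2^{k-3}} = \gamma(q^2)^{2^{k-2}}/\gamma(q)^{2^{k-3}}$, while \eqref{eq:def-L} gives $\Lambda_{k-1} = \gamma(q)^{2^{k-3}}\sum_{n}PDO(2^{k-1}n)q^n$; multiplying, the powers of $\gamma(q)$ cancel and leave
\begin{align*}
\kappa^{2^{k-3}}\Lambda_{k-1} = \gamma(q^2)^{2^{k-2}}\sum_{n\ge0}PDO(2^{k-1}n)q^n .
\end{align*}
Now I would invoke the two basic properties of $U$: the intertwining rule $U\big(f(q^2)g(q)\big) = f(q)\,U\big(g(q)\big)$, applied with $f=\gamma^{2^{k-2}}$, and the even-index extraction $U\big(\sum_n PDO(2^{k-1}n)q^n\big) = \sum_n PDO(2^k n)q^n$. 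Together these yield $U\big(\kappa^{2^{k-3}}\Lambda_{k-1}\big) = \gamma(q)^{2^{k-2}}\sum_n PDO(2^k n)q^n = \Lambda_k$, as desired. With the recurrence in hand the induction closes at once: assuming $\Lambda_{k-1}=\sum_\ell c_{k-1}(\ell)\xi^\ell\in\mathbb{Z}[\xi]$, linearity gives $\Lambda_k = \sum_\ell c_{k-1}(\ell)\zeta_{2^{k-3},\ell}$, which is \eqref{eq:L-xi-rec}; and since each $\zeta_{2^{k-3},\ell}\in\mathbb{Z}[\xi]$ by Theorem~\ref{th:kx-xi-poly} and the $c_{k-1}(\ell)$ are integers, $\Lambda_k\in\mathbb{Z}[\xi]$, giving \eqref{eq:Lk-xi-poly}. (All sums are finite because $\Lambda_{k-1}$ is a genuine polynomial in $\xi$, so no convergence issues arise.)

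For the base case $k=2$ I would appeal to Theorem~\ref{th:delta-xi}. Since $\Lambda_2 = \gamma(q)\sum_n PDO(4n)q^n$, the claim $\Lambda_2 = 3\xi^2-2\xi^3$ is exactly the content of \eqref{eq:delta-xi} once one identifies $\Lambda_2$ with $U\big(\gamma(q^2)\delta(q)^2\big)$. By the same intertwining rule, $U\big(\gamma(q^2)\delta(q)^2\big) = \gamma(q)\,U\big(\delta^2\big)$, so this identification amounts to the two-fold dissection identity $\sum_n PDO(4n)q^n = U\big(\delta(q)^2\big)$, equivalently $PDO(4n)=\sum_{a+b=2n}PDO(a)PDO(b)$, which is where the specific arithmetic of $\delta$ enters. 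I expect this base-case identification to be the main obstacle: the inductive machinery is clean algebra driven by the $U$-intertwining rule and Theorem~\ref{th:kx-xi-poly}, whereas pinning down the $k=2$ relation requires the genuine two-fold $2$-dissection of the generating function $\delta$ rather than a purely formal manipulation. Once the base case and the recurrence are secured, the explicit recursion \eqref{eq:L-xi-rec} and the integrality \eqref{eq:Lk-xi-poly} follow together by the induction described above.
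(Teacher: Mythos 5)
Your proposal is correct and follows essentially the same route as the paper: the operator identity $\Lambda_k = U\big(\kappa^{2^{k-3}}\Lambda_{k-1}\big)$ obtained from the cancellation of $\gamma(q)$-powers and the $U$-intertwining rule, linearity of $U$ to read off \eqref{eq:L-xi-rec}, Theorem~\ref{th:kx-xi-poly} for integrality, and the identification $\Lambda_2 = U\big(\gamma(q^2)\delta(q)^2\big)$ combined with Theorem~\ref{th:delta-xi} for the base case. The one input you flag as the ``main obstacle,'' namely $\sum_{n\ge 0} PDO(2n)q^n = \delta(q)^2$, is not proved in the paper either --- it is quoted directly from Andrews--Lewis--Lovejoy \cite[Theorem 21]{ALL}.
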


\begin{proof}
	We begin with the proof of \eqref{eq:L0-xi}. It was already shown in \cite[Theorem 21]{ALL} that
        \begin{align*}
		\sum_{n=0}^\infty PDO(2n)q^n = \delta(q)^2.
	\end{align*}
	Thus,
	\begin{align*}
		\Lambda_2&=\gamma(q)\sum_{n=0}^\infty PDO(4n)q^n = U\left(\gamma(q^2)\sum_{n=0}^\infty PDO(2n)q^n\right) = U\big(\gamma(q^2)\delta(q)^2\big).
	\end{align*}
	Invoking \eqref{eq:delta-xi} gives the claimed expression for $\Lambda_2$.

    For \eqref{eq:L-xi-rec}, we make use of the fact that, for $k\ge 3$,
	\begin{align*}
		\Lambda_k &= \gamma(q)^{2^{k-2}}\sum_{n=0}^\infty PDO(2^{k} n)q^n\\
		&= U\left(\gamma(q^2)^{2^{k-2}}\sum_{n=0}^\infty PDO(2^{k-1} n)q^n\right)\\
		&= U\left(\left(\frac{\gamma(q^2)^{2}}{\gamma(q)}\right)^{2^{k-3}} \gamma(q)^{2^{k-3}}\sum_{n=0}^\infty PDO(2^{k-1} n)q^n\right)\\
		&= U\left(\left(\frac{\gamma(q^2)^{2}}{\gamma(q)}\right)^{2^{k-3}} \Lambda_{k-1}\right).
	\end{align*}
	Noting that $\kappa = \gamma(q^2)^{2}/\gamma(q)$ and writing $\Lambda_{k-1}$ in the above as a polynomial in $\xi$ by virtue of \eqref{eq:Lk-xi}, we finally obtain that
	\begin{align*}
		\Lambda_k = \sum_{\ell} c_{k-1}(\ell) U\left(\kappa^{2^{k-3}}\xi^\ell\right).
	\end{align*}
	Recalling \eqref{eq:def-zeta}, the required result follows.
\end{proof}

\begin{example}
    We give a few examples to illustrate what $\Lambda_k$ looks like when $k\ge 3$:
    \begin{enumerate}[label=\textbf{(\arabic*).~},leftmargin=*,labelsep=0cm,align=left,itemsep=6pt]
        \item In light of the recurrences in Sect.~\ref{sec:kappa-xi-rec}, it is plain that
        \begin{align*}
            \zeta_{1,2} &= -15 \xi^4 + 16 \xi^5,\\
            \zeta_{1,3} &= -27 \xi^4 + 36 \xi^5 - 8 \xi^6.
        \end{align*}
        Hence, from the relation that $\Lambda_2 = 3\xi^2-2\xi^3$, we have
        \begin{align*}
            \Lambda_3 &= 3\zeta_{1,2} - 2\zeta_{1,3}\\
            &= 3\big({-15} \xi^4 + 16 \xi^5\big) -2 \big({-27} \xi^4 + 36 \xi^5 - 8 \xi^6\big),
        \end{align*}
        thereby giving us
        \begin{align}\label{eq:L1-xi}
            \Lambda_3 &= 9 \xi^4 - 24 \xi^5 + 16 \xi^6.
        \end{align}
	
        \item In the same vein,
        \begin{align*}
            \zeta_{2,4} &= -81 \xi^7 + 594 \xi^8 - 1024 \xi^9 + 512 \xi^{10},\\
            \zeta_{2,5} &= 405 \xi^8 - 900 \xi^9 + 496 \xi^{10},\\
            \zeta_{2,6} &= 729 \xi^8 - 1944 \xi^9 + 1728 \xi^{10} - 640 \xi^{11} + 128 \xi^{12}.
    	\end{align*}
    	Therefore,
    	\begin{align} \label{eq:L2-xi}
    		\Lambda_4 &= -729 \xi^7 + 7290 \xi^8 - 18720 \xi^9 + 20352 \xi^{10}\notag\\
            &\quad - 10240 \xi^{11} + 2048 \xi^{12}.
        \end{align}
	
        \item With a lengthier computation, we have
        \begin{align}\label{eq:L3-xi}
            \Lambda_5 &= 34543665 \xi^{14}-400588416 \xi^{15}+2073171024 \xi^{16}-6214952448 \xi^{17}\notag\\
            &\quad +11906611200 \xi^{18}-15261990912 \xi^{19}+13313703936 \xi^{20}\notag\\
            &\quad -7841251328 \xi^{21}+2994733056 \xi^{22}-671088640 \xi^{23}\notag\\
            &\quad +67108864 \xi^{24}.
    	\end{align}
    \end{enumerate}
\end{example}

\section{Minimal $\xi$-power in $\zeta$}

For each $i,j\ge 0$, let the coefficients $Z_{i,j}(m)$ with $m\ge 0$ be such that
\begin{align*}
	\zeta_{i,j}:= \sum_{m} Z_{i,j}(m)\xi^m.
\end{align*}
It is notable that $Z_{i,j}(m)$ eventually vanishes as $\zeta_{i,j}\in \mathbb{Z}[\xi]$, so the above summation is indeed finite. 

From the evaluations in Sect.~\ref{sec:zeta}, we have also seen that for each $\zeta_{i,j}$ as a polynomial in $\xi$, the terms $\xi^m$ with a lower degree usually vanish. In the next theorem, we characterize the minimal $\xi$-power in the polynomial expression of $\zeta_{i,j}$.

\begin{theorem}\label{th:Z-nonvanishing}
	For any $i,j\ge 0$, define
	\begin{align*}
		d_{i,j}:=\begin{cases}
			5I+J, & \text{if $(i,j)=(2I,2J)$},\\
			5I+J+1, & \text{if $(i,j)=(2I,2J+1)$},\\
			5I+J+3, & \text{if $(i,j)=(2I+1,2J)$},\\
			5I+J+3, & \text{if $(i,j)=(2I+1,2J+1)$}.
		\end{cases}
	\end{align*}
	Then for any $m$ with $0\le m< d_{i,j}$, we have
	\begin{align*}
		Z_{i,j}(m) = 0.
	\end{align*}
	Furthermore, the coefficient $Z_{i,j}(d_{i,j})$ is an odd integer.
\end{theorem}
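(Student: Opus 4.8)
The plan is to run an induction that simultaneously tracks the \emph{minimal exponent} $d_{i,j}$ of $\xi$ occurring in $\zeta_{i,j}$ and the \emph{parity} of its coefficient $Z_{i,j}(d_{i,j})$, propagating both through the recurrences \eqref{eq:zeta-rec-i} and \eqref{eq:zeta-rec-j}. I would organize it exactly as in the proof of Theorem \ref{th:kx-xi-poly}: first treat the two rows $j\in\{0,1\}$ for every $i$ by inducting on $i$ through \eqref{eq:zeta-rec-i}, seeded by $\zeta_{0,0}=1$, $\zeta_{1,0}$, $\zeta_{0,1}$, $\zeta_{1,1}$ read off from Theorem \ref{th:kx-xi-initial}; then, for each fixed $i$, climb in $j$ through \eqref{eq:zeta-rec-j}, seeded by the just-established $\zeta_{i,0}$ and $\zeta_{i,1}$. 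A direct inspection of the explicit evaluations in Theorem \ref{th:kx-xi-initial} (together with $\zeta_{0,0}=1$) confirms the assertion for all base pairs.

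The inductive step rests on two elementary features of the piecewise formula for $d_{i,j}$, both verified by a one-line computation in each parity class: the \emph{step relations} $d_{i,j}=d_{i-2,j}+5$ (for $i\ge 2$) and $d_{i,j}=d_{i,j-2}+1$ (for $j\ge 2$), and the \emph{single-step bounds} $d_{i,j}-d_{i-1,j}\in\{2,3\}$ and $d_{i,j}-d_{i,j-1}\in\{0,1\}$, where in each pair the larger value occurs precisely when $(i,j)$ has \emph{mixed} parity (one of $i,j$ even, the other odd). Granting the induction hypothesis — namely that each $\zeta$ on the right-hand side equals $Z(d)\,\xi^{d}$ plus higher-order terms with $Z(d)$ odd — I would read off the lowest degree contributed by each product term. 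In \eqref{eq:zeta-rec-i} the factor $10\xi^3-40\xi^4+32\xi^5=\xi^3(10-40\xi+32\xi^2)$ shifts degrees by $3$, so the $\zeta_{i-1,j}$ term begins at $3+d_{i-1,j}$, while the $-\xi^5\,\zeta_{i-2,j}$ term begins at $5+d_{i-2,j}=d_{i,j}$; the step and bound relations show the former is $\ge d_{i,j}$ always, with equality exactly in the mixed-parity case. The analysis of \eqref{eq:zeta-rec-j} is identical, with shifts $1$ and $1$ and the factors $\xi(10-8\xi)$ and $-\xi(9-8\xi)$.

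Consequently no power below $d_{i,j}$ appears, and the coefficient of $\xi^{d_{i,j}}$ equals $-Z_{i-2,j}(d_{i-2,j})$ (resp.\ $-9\,Z_{i,j-2}(d_{i,j-2})$) in the same-parity case and $10\,Z_{i-1,j}(d_{i-1,j})-Z_{i-2,j}(d_{i-2,j})$ (resp.\ $10\,Z_{i,j-1}(d_{i,j-1})-9\,Z_{i,j-2}(d_{i,j-2})$) in the mixed-parity case, each of which is odd by the induction hypothesis. \textbf{The one genuinely delicate point is the mixed-parity case}, where both product terms reach the minimal degree $d_{i,j}$ at once, so that a careless estimate would leave open the possibility that their leading coefficients cancel modulo $2$. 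The structural fact that rescues the argument is that the term of \emph{smaller} shift always carries the \emph{even} multiplier $10$ (the leading coefficient of $10\xi^3-\cdots$, resp.\ $10\xi-\cdots$), so its contribution is even and cannot disturb the parity, while the decisive odd contribution invariably comes from the $\zeta_{\cdot-2}$ term through the \emph{odd} multiplier $-\xi^5$ (resp.\ $-9\xi$). Once this even-versus-odd dichotomy is isolated, the remaining work is the routine parity bookkeeping across the four classes for each recurrence, and the theorem follows by induction.
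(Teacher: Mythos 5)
Your proposal is correct and follows essentially the same route as the paper's proof: a double induction seeded by $\zeta_{0,0},\zeta_{1,0},\zeta_{0,1},\zeta_{1,1}$, first running \eqref{eq:zeta-rec-i} along the rows $j=0,1$ and then \eqref{eq:zeta-rec-j} in $j$, tracking the minimal $\xi$-degree and the parity of its coefficient; your observation that in the mixed-parity case the simultaneously minimal contribution carries the even multiplier $10$ while the odd contribution comes from the $\zeta_{\cdot-2}$ term is exactly the content of the paper's explicit four-case computations, just packaged more uniformly.
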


\begin{proof}
	Clearly, $\zeta_{0,0} = 1$, from which we find that it starts with the power $\xi^{0}$ with $Z_{0,0}(d_{0,0}) = Z_{0,0}(0) = 1$ being odd. We further know from \eqref{eq:k1x0} that $\zeta_{1,0}$ starts with the power $5\xi^{3}$, while $Z_{1,0}(d_{1,0}) = Z_{1,0}(3) = 5$ is also odd. Inductively, it follows from the recurrence \eqref{eq:zeta-rec-i} that $\zeta_{2I+2,0}$ starts with the power $Z_{2I+2,0}(5I+5)\xi^{5I+5}$ with
	\begin{align*}
		Z_{2I+2,0}(d_{2I+2,0}) &= Z_{2I+2,0}(5I+5)\\
		& = -Z_{2I,0}(5I),
	\end{align*}
	which is an odd integer. Similarly, we deduce by the same recurrence that $\zeta_{2I+3,0}$ starts with the power $Z_{2I+3,0}(5I+8)\xi^{5I+8}$ with
	\begin{align*}
		Z_{2I+3,0}(d_{2I+3,0}) &= Z_{2I+3,0}(5I+8)\\
		& = 10Z_{2I+2,0}(5I+5)-Z_{2I+1,0}(5I+3),
	\end{align*}
	again being odd.
	
	In the same fashion, we find from \eqref{eq:k0x1} that $\zeta_{0,1}$ starts with the power $5\xi^{1}$ with an odd coefficient $5$, and from \eqref{eq:k1x1} that $\zeta_{1,1}$ starts with the power $3\xi^{3}$ also with its coefficient $3$ being odd. By induction under the rule of the recurrence \eqref{eq:zeta-rec-i}, it follows that $\zeta_{2I+2,1}$ starts with the power $Z_{2I+2,1}(5I+6)\xi^{5I+6}$, where the coefficient is
	\begin{align*}
		Z_{2I+2,1}(5I+6) = 10 Z_{2I+1,1}(5I+3) - Z_{2I,1}(5I+1),
	\end{align*}
	and it is an odd integer. Also, $\zeta_{2I+3,1}$ starts with the power $Z_{2I+3,1}(5I+8)\xi^{5I+8}$, where the coefficient is the odd integer
	\begin{align*}
		Z_{2I+3,1}(5I+8) = - Z_{2I+1,1}(5I+3).
	\end{align*}
	
	Now we have shown the desired result for each $\zeta_{i,0}$ and $\zeta_{i,1}$, and we will then apply induction on $j$. In light of the recurrence \eqref{eq:zeta-rec-j}, we find that $\zeta_{2I,2J+2}$ starts with the power $Z_{2I,2J+2}(5I+J+1)\xi^{5I+J+1}$, where the coefficient is an odd integer given by
	\begin{align*}
		Z_{2I,2J+2}(5I+J+1) = -9Z_{2I,2J}(5I+J).
	\end{align*}
	Likewise, $\zeta_{2I,2J+3}$ starts with the power $Z_{2I,2J+3}(5I+J+2)\xi^{5I+J+2}$, where the coefficient is an odd integer given by
	\begin{align*}
		Z_{2I,2J+3}(5I+J+2) = 10Z_{2I,2J+2}(5I+J+1) - 9Z_{2I,2J+1}(5I+J+1).
	\end{align*}
	Moreover, $\zeta_{2I+1,2J+2}$ starts with the power $Z_{2I+1,2J+2}(5I+J+4)$, where the coefficient is
	\begin{align*}
		Z_{2I+1,2J+2}(5I+J+4) = 10 Z_{2I+1,2J+1}(5I+J+3) - 9 Z_{2I+1,2J}(5I+J+3),
	\end{align*}
	which is odd. Meanwhile, $\zeta_{2I+1,2J+3}$ starts with the power $Z_{2I+1,2J+3}(5I+J+4)$, where the coefficient is
	\begin{align*}
		Z_{2I+1,2J+3}(5I+J+4) = - 9 Z_{2I+1,2J+1}(5I+J+3),
	\end{align*}
	once again being odd.
\end{proof}

\section{$2$-Adic analysis for $\zeta$}

With all of the above preparations in hand, we are now in a position to begin thinking about the divisibility of various objects by powers of $2$. Throughout the remainder of this work, we denote by $\nu(n)$ the \emph{$2$-adic evaluation} of $n$, that is, $\nu(n)$ is the \emph{largest} nonnegative integer $\alpha$ such that $2^\alpha\mid n$. We also adopt the convention that $\nu(0) = \infty$.

The following trivial result on $2$-adic evaluations will be frequently used:

\begin{lemma}
	Let $a$ and $b$ be two integers. Then
	\begin{align}\label{eq:2-adic-add}
		\nu(a+b)\begin{cases}
			\,\,=\,\, \min\{\nu(a),\nu(b)\}, & \text{if $\nu(a)\ne \nu(b)$},\\[6pt]
			\,\,\ge\,\, 2\nu(a)=2\nu(b), & \text{if $\nu(a)= \nu(b)$}.
		\end{cases}
	\end{align}
\end{lemma}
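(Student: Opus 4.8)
The plan is to treat the two cases separately, in each reducing the estimate to the $2$-adic valuation of the sum of the odd parts of $a$ and $b$. I would first dispose of the case $\nu(a)\neq\nu(b)$. Putting $m=\min\{\nu(a),\nu(b)\}$ and writing $a=2^m u$ and $b=2^m w$, the hypothesis $\nu(a)\neq\nu(b)$ says precisely that one of $u,w$ is odd while the other is even; hence $u+w$ is odd and $a+b=2^m(u+w)$ yields $\nu(a+b)=m=\min\{\nu(a),\nu(b)\}$. This gives the first line with equality, and I expect no difficulty here.

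For the tie case, I would set $t:=\nu(a)=\nu(b)$, factor out the common power, and write $a=2^t a'$ and $b=2^t b'$ with both $a'$ and $b'$ odd. Then $a+b=2^t(a'+b')$, so $\nu(a+b)=t+\nu(a'+b')$, and the stated bound $\nu(a+b)\ge 2t$ becomes exactly the cofactor estimate $\nu(a'+b')\ge t$. The sum of two odd integers is even, which supplies the cheap estimate $\nu(a'+b')\ge 1$; in particular the bound $\nu(a+b)\ge 2t$ is already immediate for $t\le 1$, so all the content is concentrated in $t\ge 2$.

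The hard part will be promoting this single-digit parity observation to the full cofactor bound $\nu(a'+b')\ge t$, i.e.\ forcing the lowest $t$ binary digits of $a'+b'$ to cancel rather than only the lowest one. The natural route is to examine $a'$ and $b'$ modulo $2^t$ and to show that their residues sum to $0$ there; establishing this divisibility of $a'+b'$ by $2^t$ is the step I anticipate as the main obstacle, and it is precisely where the strength of the tie-case inequality $\nu(a+b)\ge 2\nu(a)$ resides, as opposed to the weaker $\nu(a+b)\ge \nu(a)+1$ that the parity argument alone produces.
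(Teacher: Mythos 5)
Your first case is fine, and your reduction of the tie case to the cofactor bound $\nu(a'+b')\ge t$ correctly isolates where all the difficulty lies --- but the step you flag as ``the main obstacle'' is not merely hard, it is false, because the second case of \eqref{eq:2-adic-add} as printed is false. Take $a=b=4$: then $\nu(a)=\nu(b)=2$ but $\nu(a+b)=\nu(8)=3<4=2\nu(a)$; more generally $a=b=2^t$ with $t\ge 2$ violates the claimed bound, since $\nu(2^{t+1})=t+1<2t$. Equivalently, two odd integers $a'$ and $b'$ need not satisfy $a'+b'\equiv 0 \pmod{2^t}$ (take $a'=b'=1$, $t=2$), so no analysis of residues modulo $2^t$ can close the gap you describe. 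The parity observation you dismiss as ``cheap'' is in fact the whole content of the tie case: the correct (and surely intended) statement is $\nu(a+b)\ge \nu(a)+1$ when $\nu(a)=\nu(b)$, and your argument already proves that completely.

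For comparison with the paper: the lemma is presented there as a ``trivial result'' with no proof at all, so there is nothing to measure your argument against; what matters is how the lemma is used. The only invocations of the tie case (the $i=4I+4$ step in the proof of Theorem \ref{th:2-adic-Z-i-0}, and the analogous $j=4J+4$ step in Theorem \ref{th:Z-2-adic}) occur with $\nu(a)=\nu(b)=1$, where $2\nu(a)$ and $\nu(a)+1$ both equal $2$, so the corrected bound $\nu(a+b)\ge\nu(a)+1$ supports every application in the paper and the misstatement is harmless downstream. The practical lesson for you: when a ``trivial'' inequality resists the obvious argument, test small cases before assuming the missing strength must come from a cleverer proof --- here it would have revealed immediately that the strength simply is not there.
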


Let us start by analyzing the $2$-adic behavior of the coefficients $Z_{i,0}(m)$ for each $i\ge 0$ whenever $m\ge d_{i,0}$.

\begin{theorem}\label{th:2-adic-Z-i-0}
	For any $i\ge 0$, we have
	\begin{align*}
		\nu\big(Z_{i,0}(d_{i,0})\big) = 0,
	\end{align*}
	and
	\begin{align*} 
		\nu\big(Z_{i,0}(d_{i,0}+1)\big) \begin{cases}
			= 1, & \text{if $i\equiv 2 \bmod{4}$},\\
			\ge 2, & \text{if $i\not\equiv 2 \bmod{4}$}.
		\end{cases}
	\end{align*}
	Furthermore, for $M\ge 2$,
	\begin{align*} 
		\nu\big(Z_{i,0}(d_{i,0}+M)\big) \ge M+1.
	\end{align*}
\end{theorem}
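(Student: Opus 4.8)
The plan is to induct on $i$, specializing the recurrence \eqref{eq:zeta-rec-i} to $j=0$. The first assertion $\nu\big(Z_{i,0}(d_{i,0})\big)=0$ needs no new argument: it is precisely the statement that the leading coefficient $Z_{i,0}(d_{i,0})$ is odd, which is already part of Theorem \ref{th:Z-nonvanishing}. So I would record this and devote the real work to the remaining two claims.

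To keep the bookkeeping manageable I would write $a_i(M):=Z_{i,0}(d_{i,0}+M)$, so that $a_i(M)=0$ for $M<0$ (by the minimality of $d_{i,0}$) and $\nu\big(a_i(0)\big)=0$ for every $i$ (again by Theorem \ref{th:Z-nonvanishing}). Extracting the coefficient of $\xi^{d_{i,0}+M}$ from \eqref{eq:zeta-rec-i} and inserting the gap values $d_{i,0}-d_{i-2,0}=5$ and $d_{i,0}-d_{i-1,0}=2$ (for $i$ even) or $3$ (for $i$ odd) collapses the recurrence into the two shifted forms
\[
a_i(M)=10\,a_{i-1}(M-1)-40\,a_{i-1}(M-2)+32\,a_{i-1}(M-3)-a_{i-2}(M)\qquad(i\text{ even}),
\]
\[
a_i(M)=10\,a_{i-1}(M)-40\,a_{i-1}(M-1)+32\,a_{i-1}(M-2)-a_{i-2}(M)\qquad(i\text{ odd}).
\]
I would then run an induction on $i$ with base cases $i=0,1$ read off from $\zeta_{0,0}=1$ and $\zeta_{1,0}=5\xi^3-20\xi^4+16\xi^5$, carrying forward as a joint hypothesis both the $\bmod 4$ behaviour of $\nu\big(a_i(1)\big)$ and the bound $\nu\big(a_i(M)\big)\ge M+1$ for $M\ge 2$.

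The bound for $M\ge 2$ is the easy half. Splitting on the parity of $i$ and using the valuations $\nu(10)=1$, $\nu(40)=3$, $\nu(32)=5$ together with the inductive estimates for $a_{i-1}(\cdot)$ and $a_{i-2}(\cdot)$, one checks that \emph{each} of the four terms already has $2$-adic valuation at least $M+1$; by \eqref{eq:2-adic-add} the sum inherits this bound, so no cancellation phenomenon enters. The only mildly delicate inputs are the smallest arguments, where I would feed in $\nu\big(a_{i-1}(0)\big)=0$ and the inductive value of $\nu\big(a_{i-1}(1)\big)$ to settle $M=2,3$.

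The genuine obstacle is the exact-versus-inequality dichotomy for $\nu\big(a_i(1)\big)$ modulo $4$, and it is here that the induction must be run with care. For $i$ even the recurrence leaves only $a_i(1)=10\,a_{i-1}(0)-a_{i-2}(1)$ with $\nu\big(10\,a_{i-1}(0)\big)=1$ unconditionally, so everything hinges on $\nu\big(a_{i-2}(1)\big)$: if $i\equiv 2\pmod 4$ then $i-2\equiv 0$ and the hypothesis gives $\nu\big(a_{i-2}(1)\big)\ge 2$, the two valuations differ, and \eqref{eq:2-adic-add} yields $\nu\big(a_i(1)\big)=1$ exactly; if instead $i\equiv 0\pmod 4$ then $i-2\equiv 2$ and the hypothesis gives $\nu\big(a_{i-2}(1)\big)=1$ \emph{on the nose}, the valuations coincide, and the forced cancellation pushes $\nu\big(a_i(1)\big)\ge 2$. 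This is exactly why the statement must record an equality in the $i\equiv 2$ case and not merely an inequality: the induction can only detect the cancellation in the $i\equiv 0$ case because it knows the valuation of $a_{i-2}(1)$ precisely. For $i$ odd (so $i\not\equiv 2$) the matter is simpler, since each surviving term of $a_i(1)=10\,a_{i-1}(1)-40\,a_{i-1}(0)-a_{i-2}(1)$ already has valuation at least $2$, giving $\nu\big(a_i(1)\big)\ge 2$ directly.
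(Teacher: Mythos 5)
Your proof is correct and follows essentially the same route as the paper: specializing the recurrence \eqref{eq:zeta-rec-i} to $j=0$, doing termwise $2$-adic bookkeeping via \eqref{eq:2-adic-add}, and isolating the single genuine cancellation in $10\,a_{i-1}(0)-a_{i-2}(1)$ when $i\equiv 0\pmod{4}$ is exactly the content of the paper's valuation tables, which merely organize the induction in blocks of four ($i=4I+2,\dots,4I+5$) instead of your two-term strong induction. Your explicit shifted coefficient recurrences make the same computation somewhat more transparent, but no new idea is involved.
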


\begin{proof}
	We first note that the theorem is true for $i=0$ and $1$ by the fact that $\zeta_{0,0}=1$ and the relation given in \eqref{eq:k1x0}, respectively. Hence, we may apply induction to prove the required result for $i=4I+2$, $4I+3$, $4I+4$ and $4I+5$ with the assumption that it holds for $i=4I+0$ and $4I+1$.
	
	The $i=4I+2$ case can be illustrated by the following table:
	\begin{align*}
		\renewcommand\arraystretch{1.5}
		\scalebox{0.8}{%
		\begin{tabular*}{1.25\textwidth}{p{0pt}cp{0pt}|@{\extracolsep{\fill}}ccccccccccc}
			\hline
			& $m-d_{4I+0,0}$ && $0$ & $1$ & $2$ & $3$ & $4$ & $5$ & $6$ & $7$ & $8$ & $9$ & $M$\\
			\hline
			& $\nu\big(Z_{4I+0,0}(m)\big)$ && $0$ & $\ge 2$ & $\ge 3$ & $\ge 4$ & $\ge 5$ & $\ge 6$ & $\ge 7$ & $\ge 8$ & $\ge 9$ & $\ge 10$ & $\ge (M+1)$\\
			\hline
			& $\nu\big(Z_{4I+1,0}(m)\big)$ && $\infty$ & $\infty$ & $\infty$ & $0$ & $\ge 2$ & $\ge 3$ & $\ge 4$ & $\ge 5$ & $\ge 6$ & $\ge 7$ & $\ge (M-2)$\\
			\hline
			&  && $\infty$ & $\infty$ & $\infty$ & $\infty$ & $\infty$ & $0$ & $\ge 2$ & $\ge 3$ & $\ge 4$ & $\ge 5$ & $\ge (M-4)$\\
			&  && $\infty$ & $\infty$ & $\infty$ & $\infty$ & $\infty$ & $\infty$ & $1$ & $\ge 3$ & $\ge 4$ & $\ge 5$ & $\ge (M-4)$\\
			& $\nu\big(Z_{4I+2,0}(m)\big)$ && $\infty$ & $\infty$ & $\infty$ & $\infty$ & $\infty$ & $0$ & $1$ & $\ge 3$ & $\ge 4$ & $\ge 5$ & $\ge (M-4)$\\
			\hline
		\end{tabular*}
	}
	\end{align*}
	Here the last column holds for any $M\ge 10$. Recalling the recurrence \eqref{eq:zeta-rec-i}, we have
	\begin{align*}
		\zeta_{4I+2,0} = \big(10 \xi^3 - 40 \xi^4 + 32 \xi^5\big)\cdot \zeta_{4I+1,0} - \big(\xi^5\big)\cdot \zeta_{4I+0,0}.
	\end{align*}
	Hence, the third line provides the $2$-adic evaluations of the coefficients $Z_{4I+2,0}$ contributed from $\big(\xi^5\big)\cdot \zeta_{4I+0,0}$, while the fourth line provides the contribution from $\big(10 \xi^3 - 40 \xi^4 + 32 \xi^5\big)\cdot \zeta_{4I+1,0}$. For example, the $2$-adic evaluation of the coefficient of the $\xi^{d_{4I+0,0}+5}$ term in $\big(\xi^5\big)\cdot \zeta_{4I+0,0}$ is clearly $0$, while in $\big(10 \xi^3 - 40 \xi^4 + 32 \xi^5\big)\cdot \zeta_{4I+1,0}$, the power $\xi^{d_{4I+0,0}+5}$ vanishes, thereby having coefficient $0$ and $2$-adic evaluation $\infty$. Consequently,
	\begin{align*}
		\nu\big(Z_{4I+2,0}(d_{4I+0,0}+5)\big) = 0.
	\end{align*}
	By Theorem \ref{th:Z-nonvanishing}, we have $d_{4I+2,0} = d_{4I+0,0}+5$, so as to give us
	\begin{align*}
		\nu\big(Z_{4I+2,0}(d_{4I+2,0})\big) = 0.
	\end{align*}
	Likewise, the $2$-adic evaluation of the coefficient of the $\xi^{d_{4I+0,0}+6}$ term in $\big(\xi^5\big)\cdot \zeta_{4I+0,0}$ is $\ge 2$, while in $\big(10 \xi^3 - 40 \xi^4 + 32 \xi^5\big)\cdot \zeta_{4I+1,0}$, the corresponding $2$-adic evaluation is $1$. So we have
	\begin{align*}
		\nu\big(Z_{4I+2,0}(d_{4I+0,0}+6)\big) = \min\{\ge 2,1\} = 1,
	\end{align*}
	thereby yielding that
	\begin{align*}
		\nu\big(Z_{4I+2,0}(d_{4I+2,0}+1)\big) = 1.
	\end{align*}
	This process can be continued to all remaining coefficients $Z_{4I+2,0}$.
	
	For $i=4I+3$, we shall make use of the following table and argue in the same vein:
	\begin{align*}
		\renewcommand\arraystretch{1.5}
		\scalebox{0.8}{%
			\begin{tabular*}{1.25\textwidth}{p{0pt}cp{0pt}|@{\extracolsep{\fill}}ccccccccccc}
				\hline
				& $m-d_{4I+1,0}$ && $0$ & $1$ & $2$ & $3$ & $4$ & $5$ & $6$ & $7$ & $8$ & $9$ & $M$\\
				\hline
				& $\nu\big(Z_{4I+1,0}(m)\big)$ && $0$ & $\ge 2$ & $\ge 3$ & $\ge 4$ & $\ge 5$ & $\ge 6$ & $\ge 7$ & $\ge 8$ & $\ge 9$ & $\ge 10$ & $\ge (M+1)$\\
				\hline
				& $\nu\big(Z_{4I+2,0}(m)\big)$ && $\infty$ & $\infty$ & $0$ & $1$ & $\ge 3$ & $\ge 4$ & $\ge 5$ & $\ge 6$ & $\ge 7$ & $\ge 8$ & $\ge (M-1)$\\
				\hline
				&  && $\infty$ & $\infty$ & $\infty$ & $\infty$ & $\infty$ & $0$ & $\ge 2$ & $\ge 3$ & $\ge 4$ & $\ge 5$ & $\ge (M-4)$\\
				&  && $\infty$ & $\infty$ & $\infty$ & $\infty$ & $\infty$ & $1$ & $2$ & $\ge 4$ & $\ge 5$ & $\ge 6$ & $\ge (M-3)$\\
				& $\nu\big(Z_{4I+3,0}(m)\big)$ && $\infty$ & $\infty$ & $\infty$ & $\infty$ & $\infty$ & $0$ & $\ge 2$ & $\ge 3$ & $\ge 4$ & $\ge 5$ & $\ge (M-4)$\\
				\hline
			\end{tabular*}
		}
	\end{align*}

	For $i=4I+4$, the required table is
	\begin{align*}
		\renewcommand\arraystretch{1.5}
		\scalebox{0.8}{%
			\begin{tabular*}{1.25\textwidth}{p{0pt}cp{0pt}|@{\extracolsep{\fill}}ccccccccccc}
				\hline
				& $m-d_{4I+2,0}$ && $0$ & $1$ & $2$ & $3$ & $4$ & $5$ & $6$ & $7$ & $8$ & $9$ & $M$\\
				\hline
				& $\nu\big(Z_{4I+2,0}(m)\big)$ && $0$ & $1$ & $\ge 3$ & $\ge 4$ & $\ge 5$ & $\ge 6$ & $\ge 7$ & $\ge 8$ & $\ge 9$ & $\ge 10$ & $\ge (M+1)$\\
				\hline
				& $\nu\big(Z_{4I+3,0}(m)\big)$ && $\infty$ & $\infty$ & $\infty$ & $0$ & $\ge 2$ & $\ge 3$ & $\ge 4$ & $\ge 5$ & $\ge 6$ & $\ge 7$ & $\ge (M-2)$\\
				\hline
				&  && $\infty$ & $\infty$ & $\infty$ & $\infty$ & $\infty$ & $0$ & $1$ & $\ge 3$ & $\ge 4$ & $\ge 5$ & $\ge (M-4)$\\
				&  && $\infty$ & $\infty$ & $\infty$ & $\infty$ & $\infty$ & $\infty$ & $1$ & $\ge 3$ & $\ge 4$ & $\ge 5$ & $\ge (M-4)$\\
				& $\nu\big(Z_{4I+4,0}(m)\big)$ && $\infty$ & $\infty$ & $\infty$ & $\infty$ & $\infty$ & $0$ & $\ge 2$ & $\ge 3$ & $\ge 4$ & $\ge 5$ & $\ge (M-4)$\\
				\hline
			\end{tabular*}
		}
	\end{align*}
	It is notable that for the $2$-adic evaluation of
	\begin{align*}
		\nu\big(Z_{4I+4,0}(d_{4I+4,0}+1)\big) = \nu\big(Z_{4I+4,0}(d_{4I+2,0}+6)\big),
	\end{align*}
	we shall use the second case of \eqref{eq:2-adic-add} so as to get
	\begin{align*}
		\nu\big(Z_{4I+4,0}(d_{4I+2,0}+6)\big)\ge 2\cdot 1 = 2,
	\end{align*}
	as given in the table.
	
	Finally, for $i=4I+5$, we require the table:
	\begin{align*}
		\renewcommand\arraystretch{1.5}
		\scalebox{0.8}{%
			\begin{tabular*}{1.25\textwidth}{p{0pt}cp{0pt}|@{\extracolsep{\fill}}ccccccccccc}
				\hline
				& $m-d_{4I+3,0}$ && $0$ & $1$ & $2$ & $3$ & $4$ & $5$ & $6$ & $7$ & $8$ & $9$ & $M$\\
				\hline
				& $\nu\big(Z_{4I+3,0}(m)\big)$ && $0$ & $\ge 2$ & $\ge 3$ & $\ge 4$ & $\ge 5$ & $\ge 6$ & $\ge 7$ & $\ge 8$ & $\ge 9$ & $\ge 10$ & $\ge (M+1)$\\
				\hline
				& $\nu\big(Z_{4I+4,0}(m)\big)$ && $\infty$ & $\infty$ & $0$ & $\ge 2$ & $\ge 3$ & $\ge 4$ & $\ge 5$ & $\ge 6$ & $\ge 7$ & $\ge 8$ & $\ge (M-1)$\\
				\hline
				&  && $\infty$ & $\infty$ & $\infty$ & $\infty$ & $\infty$ & $0$ & $\ge 2$ & $\ge 3$ & $\ge 4$ & $\ge 5$ & $\ge (M-4)$\\
				&  && $\infty$ & $\infty$ & $\infty$ & $\infty$ & $\infty$ & $1$ & $\ge 3$ & $\ge 4$ & $\ge 5$ & $\ge 6$ & $\ge (M-3)$\\
				& $\nu\big(Z_{4I+5,0}(m)\big)$ && $\infty$ & $\infty$ & $\infty$ & $\infty$ & $\infty$ & $0$ & $\ge 2$ & $\ge 3$ & $\ge 4$ & $\ge 5$ & $\ge (M-4)$\\
				\hline
			\end{tabular*}
		}
	\end{align*}
	and then perform a similar analysis to that for the above $i=4I+2$ case.
\end{proof}

In the same fashion, we have parallel results for $Z_{i,1}(m)$.

\begin{theorem}\label{th:2-adic-Z-i-1}
	For any $i\ge 0$, we have
	\begin{align*} 
		\nu\big(Z_{i,1}(d_{i,1})\big) = 0,
	\end{align*}
	and
	\begin{align*} 
		\nu\big(Z_{i,1}(d_{i,1}+1)\big) \begin{cases}
			= 1, & \text{if $i\equiv 1 \bmod{4}$},\\
			\ge 2, & \text{if $i\not\equiv 1 \bmod{4}$}.
		\end{cases}
	\end{align*}
	Furthermore, for $M\ge 2$,
	\begin{align*} 
		\nu\big(Z_{i,1}(d_{i,1}+M)\big) \ge M+1.
	\end{align*}
\end{theorem}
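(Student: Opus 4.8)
The plan is to prove Theorem \ref{th:2-adic-Z-i-1} by induction on $i$ in exactly the same manner as Theorem \ref{th:2-adic-Z-i-0}, with the shift by one in the residue class condition (here $i\equiv 1\bmod 4$ rather than $i\equiv 2\bmod 4$) being the only structural change. First I would verify the base cases $i=0$ and $i=1$ directly: the relation \eqref{eq:k0x1} gives $\zeta_{0,1}=5\xi-4\xi^2$, so $d_{0,1}=1$ with $Z_{0,1}(1)=5$ odd, $Z_{0,1}(2)=-4$ having $\nu=2\ge 2$, which matches the $i\not\equiv 1$ branch; and \eqref{eq:k1x1} gives $\zeta_{1,1}=3\xi^3-18\xi^4+16\xi^5$, so $d_{1,1}=3$ with $Z_{1,1}(3)=3$ odd and $Z_{1,1}(4)=-18$ having $\nu=1$, matching the $i\equiv 1$ branch. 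These serve as the two seeds, and the inductive engine is the recurrence \eqref{eq:zeta-rec-i}, $\zeta_{i,1}=(10\xi^3-40\xi^4+32\xi^5)\zeta_{i-1,1}-\xi^5\zeta_{i-2,1}$, whose multiplier coefficients have $2$-adic evaluations $\nu(10)=1$, $\nu(40)=3$, $\nu(32)=5$ and $\nu(1)=0$ — identical to the data driving Theorem \ref{th:2-adic-Z-i-0}.

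The main body of the proof consists of reproducing the four tables for the residue classes $i=4I+2,\,4I+3,\,4I+4,\,4I+5$, computed from the inductive hypotheses at $i=4I+0$ and $4I+1$ by multiplying the known $2$-adic valuation profiles against the two terms of the recurrence and combining via the addition lemma \eqref{eq:2-adic-add}. I would simply assert that the argument is verbatim identical to the proof of Theorem \ref{th:2-adic-Z-i-0}, with $d_{i,0}$ replaced by $d_{i,1}$ throughout and the residue $2\bmod 4$ replaced by $1\bmod 4$; indeed, since Theorem \ref{th:Z-nonvanishing} gives $d_{2I,1}=5I+1$ and $d_{2I+1,1}=5I+3$, the \emph{spacings} $d_{i,1}-d_{i-1,1}$ between consecutive starting powers are the same sequence $(3,2,3,2,\dots)$ as in the $j=0$ case, so the relative alignment of the nonzero coefficients in the two product terms is preserved, and the tables transcribe without modification.

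The one place demanding genuine care — and the step I expect to be the main obstacle — is the single entry where the two contributions to a coefficient have \emph{equal} $2$-adic valuation, forcing the use of the second (inequality) branch of \eqref{eq:2-adic-add}. In Theorem \ref{th:2-adic-Z-i-0} this occurred in the $i=4I+4$ table at $\nu(Z_{4I+4,0}(d_{4I+4,0}+1))$, where two terms of valuation $1$ collided to yield $\ge 2$ rather than an exact value; here the analogous collision will occur in the $i=4I+3$ table (since the residue classes are shifted by one), producing $\nu(Z_{4I+3,1}(d_{4I+3,1}+1))\ge 2$ and thereby the $i\not\equiv 1\bmod 4$ branch of the claimed bound. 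I would treat this collision explicitly to confirm that it yields the stated $\ge 2$, while in the complementary class $i=4I+1$ the two contributions have \emph{distinct} valuations so that the first branch of \eqref{eq:2-adic-add} gives the sharp value $1$. All remaining entries follow from the $\nu(a+b)=\min\{\nu(a),\nu(b)\}$ rule applied to terms of unequal valuation, together with the fact that the leading coefficients $Z_{i,1}(d_{i,1})$ are odd as already established in Theorem \ref{th:Z-nonvanishing}.
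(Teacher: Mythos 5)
Your proposal is correct and follows essentially the same route as the paper, whose own proof simply checks the base cases $i=0,1$ via \eqref{eq:k0x1} and \eqref{eq:k1x1} and then invokes the same four-table induction as Theorem \ref{th:2-adic-Z-i-0}; your identification of the equal-valuation collision moving to the $i=4I+3$ table (yielding $\ge 2$ via the second branch of \eqref{eq:2-adic-add}) and of the sharp value $1$ arising in the class $i\equiv 1\bmod 4$ is accurate. One small imprecision: the spacing sequence $d_{i,1}-d_{i-1,1}$ is $(2,3,2,3,\dots)$ rather than $(3,2,3,2,\dots)$, i.e., shifted by one relative to the $j=0$ case, so the tables realign rather than transcribe verbatim — but your subsequent analysis already accounts for exactly this shift, so the argument stands.
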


\begin{proof}
	By \eqref{eq:k0x1} and \eqref{eq:k1x1}, the theorem holds true for $i=0$ and $1$. We may then apply a similar inductive argument to that for Theorem \ref{th:2-adic-Z-i-0}.  
\end{proof}

Now we are ready to perform the $2$-adic evaluations for the coefficients $Z_{2^k,j}(m)$ for each $k\ge 0$ and $j\ge 0$ whenever $m\ge d_{2^k,j}$.

\begin{theorem}\label{th:Z-2-adic}
	For any $k\ge 2$ and $j\ge 0$, we have
	\begin{align*} 
		\nu\big(Z_{2^k,j}(d_{2^k,j})\big) = 0,
	\end{align*}
	and
	\begin{align*} 
		\nu\big(Z_{2^k,j}(d_{2^k,j}+1)\big) \begin{cases}
			= 1, & \text{if $j\equiv 2 \bmod{4}$},\\
			\ge 2, & \text{if $j\not\equiv 2 \bmod{4}$}.
		\end{cases}
	\end{align*}
	Furthermore, for $M\ge 2$,
	\begin{align*} 
		\nu\big(Z_{2^k,j}(d_{2^k,j}+M)\big) \ge M+1.
	\end{align*}
\end{theorem}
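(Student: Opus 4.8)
The plan is to keep $i=2^k$ fixed and to induct on $j$, now driving the coefficient recurrence in the $j$-direction through \eqref{eq:zeta-rec-j}, in complete analogy with the way Theorems \ref{th:2-adic-Z-i-0} and \ref{th:2-adic-Z-i-1} drove \eqref{eq:zeta-rec-i} in the $i$-direction. The two base cases $j=0$ and $j=1$ are handed to us directly by those theorems: since $k\ge 2$ forces $2^k\equiv 0\pmod 4$, we have $2^k\not\equiv 2\pmod 4$ and $2^k\not\equiv 1\pmod 4$, so both $\nu\big(Z_{2^k,0}(d_{2^k,0}+1)\big)$ and $\nu\big(Z_{2^k,1}(d_{2^k,1}+1)\big)$ are $\ge 2$, matching the claim at the residues $j\equiv 0,1\pmod 4$. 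Because the distinguished class here is $j\equiv 2\pmod 4$, the induction advances in blocks of four, deducing the statement for $j=4J+2,4J+3,4J+4,4J+5$ from its validity at $j=4J$ and $j=4J+1$; I would organise the bookkeeping in the same tabular format used earlier.

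The mechanical heart of the argument is the coefficient form of \eqref{eq:zeta-rec-j},
\begin{align*}
	Z_{2^k,j}(m) = 10\,Z_{2^k,j-1}(m-1) - 8\,Z_{2^k,j-1}(m-2) - 9\,Z_{2^k,j-2}(m-1) + 8\,Z_{2^k,j-2}(m-2),
\end{align*}
whose multiplier coefficients carry the valuations $\nu(10)=1$, $\nu(9)=0$, and $\nu(8)=3$. Theorem \ref{th:Z-nonvanishing} gives $d_{2^k,j}=5\cdot 2^{k-1}+\lceil j/2\rceil$, so that for even $j$ the leading term of $\zeta_{2^k,j}$ is produced solely by $-9\,\xi\,\zeta_{2^k,j-2}$ and equals $-9$ times an odd number, while for odd $j$ it is produced by $10\,\xi\,\zeta_{2^k,j-1}-9\,\xi\,\zeta_{2^k,j-2}$ and equals $10\cdot(\text{odd})-9\cdot(\text{odd})$; in either case it is odd, establishing $\nu\big(Z_{2^k,j}(d_{2^k,j})\big)=0$. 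The tail bound $\nu\big(Z_{2^k,j}(d_{2^k,j}+M)\big)\ge M+1$ for $M\ge 2$ then follows by propagating the inductive tail estimates through the displayed recurrence, exactly as in the tables of Theorems \ref{th:2-adic-Z-i-0} and \ref{th:2-adic-Z-i-1}: the $\xi^2$-terms of the multipliers carry the generous valuation $3$, and at the single boundary index $M=2$ one uses that for even $j$ the neighbour $j-1$ is odd, hence never in the special class $\equiv 2\pmod 4$, so its subleading valuation is already $\ge 2$.

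The main obstacle is the subleading coefficient $\nu\big(Z_{2^k,j}(d_{2^k,j}+1)\big)$, where the sharp dichotomy between $=1$ and $\ge 2$ must be secured. For even $j$ this coefficient reduces to $10\cdot(\text{odd})-9\,Z_{2^k,j-2}(d_{2^k,j-2}+1)+8\cdot(\text{odd})$, in which the first summand has valuation $1$ and the last has valuation $3$. When $j\equiv 2\pmod 4$ we have $j-2\equiv 0$, so the inductive hypothesis forces the middle summand to have valuation $\ge 2$, and the total valuation is exactly $1$, as required. The delicate case is $j\equiv 0\pmod 4$: now $j-2\equiv 2$, so by induction the middle summand has valuation \emph{exactly} $1$, colliding with the valuation-$1$ leading summand, and one must invoke the second branch of \eqref{eq:2-adic-add} to conclude that two quantities of equal valuation $1$ sum to something of valuation $\ge 2$ --- precisely the doubling phenomenon already flagged in the $i=4I+4$ line of Theorem \ref{th:2-adic-Z-i-0}. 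Confirming that this collision is systematic and that no analogous hidden cancellation degrades the valuation in the $j\equiv 2$ case is where essentially all the care is concentrated; by contrast the residues $j\equiv 1,3\pmod 4$ are soft, since for odd $j$ every contributing term already carries valuation $\ge 2$.
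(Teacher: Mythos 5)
Your proposal is correct and follows essentially the same route as the paper: both take the base cases $j=0,1$ from Theorems \ref{th:2-adic-Z-i-0} and \ref{th:2-adic-Z-i-1} (using $2^k\equiv 0\pmod 4$) and then induct on $j$ in blocks of four via the recurrence \eqref{eq:zeta-rec-j}, with the only decisive point being the valuation-$1$ collision at $j\equiv 0\pmod 4$ handled by the second branch of \eqref{eq:2-adic-add}. The paper records the bookkeeping in tables where you write out the coefficient recurrence explicitly, but the substance is identical.
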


\begin{proof}
	In view of Theorems \ref{th:2-adic-Z-i-0} and \ref{th:2-adic-Z-i-1}, it is known that the results are true for $Z_{2^k,0}(m)$ and $Z_{2^k,1}(m)$ with any $k\ge 2$. Now we apply induction on $j$ and prove for $j=4J+2$, $4J+3$, $4J+4$ and $4J+5$ under the assumption of validity for $j=4J+0$ and $4J+1$. Here a similar strategy to that for Theorem \ref{th:2-adic-Z-i-0} will be used, with \eqref{eq:zeta-rec-j} being invoked:
	\begin{align*}
		\zeta_{2^k,j} = \big(10 \xi - 8 \xi^2\big)\cdot \zeta_{2^k,j-1} - \big(9 \xi - 8 \xi^2\big)\cdot \zeta_{2^k,j-2}.
	\end{align*}
	
	For $j=4J+2$, we require this table:
	\begin{align*}
		\renewcommand\arraystretch{1.5}
		\scalebox{0.8}{%
			\begin{tabular*}{1.25\textwidth}{p{0pt}cp{0pt}|@{\extracolsep{\fill}}ccccccc}
				\hline
				& $m-d_{2^k,4J+0}$ && $0$ & $1$ & $2$ & $3$ & $4$ & $5$ & $M$\\
				\hline
				& $\nu\big(Z_{2^k,4J+0}(m)\big)$ && $0$ & $\ge 2$ & $\ge 3$ & $\ge 4$ & $\ge 5$ & $\ge 6$ & $\ge (M+1)$\\
				\hline
				& $\nu\big(Z_{2^k,4J+1}(m)\big)$ && $\infty$ & $0$ & $\ge 2$ & $\ge 3$ & $\ge 4$ & $\ge 5$ & $\ge (M+0)$\\
				\hline
				&  && $\infty$ & $0$ & $\ge 2$ & $\ge 3$ & $\ge 4$ & $\ge 5$ & $\ge (M+0)$\\
				&  && $\infty$ & $\infty$ & $1$ & $\ge 3$ & $\ge 4$ & $\ge 5$ & $\ge (M+0)$\\
				& $\nu\big(Z_{2^k,4J+2}(m)\big)$ && $\infty$ & $0$ & $1$ & $\ge 3$ & $\ge 4$ & $\ge 5$ & $\ge (M+0)$\\
				\hline
			\end{tabular*}
		}
	\end{align*}
	For $j=4J+3$, we require this table:
	\begin{align*}
		\renewcommand\arraystretch{1.5}
		\scalebox{0.8}{%
			\begin{tabular*}{1.25\textwidth}{p{0pt}cp{0pt}|@{\extracolsep{\fill}}ccccccc}
				\hline
				& $m-d_{2^k,4J+1}$ && $0$ & $1$ & $2$ & $3$ & $4$ & $5$ & $M$\\
				\hline
				& $\nu\big(Z_{2^k,4J+1}(m)\big)$ && $0$ & $\ge 2$ & $\ge 3$ & $\ge 4$ & $\ge 5$ & $\ge 6$ & $\ge (M+1)$\\
				\hline
				& $\nu\big(Z_{2^k,4J+2}(m)\big)$ && $0$ & $1$ & $\ge 3$ & $\ge 4$ & $\ge 5$ & $\ge 6$ & $\ge (M+1)$\\
				\hline
				&  && $\infty$ & $0$ & $\ge 2$ & $\ge 3$ & $\ge 4$ & $\ge 5$ & $\ge (M+0)$\\
				&  && $\infty$ & $1$ & $2$ & $\ge 4$ & $\ge 5$ & $\ge 6$ & $\ge (M+1)$\\
				& $\nu\big(Z_{2^k,4J+3}(m)\big)$ && $\infty$ & $0$ & $\ge 2$ & $\ge 3$ & $\ge 4$ & $\ge 5$ & $\ge (M+0)$\\
				\hline
			\end{tabular*}
		}
	\end{align*}
	For $j=4J+4$, we require this table:
	\begin{align*}
		\renewcommand\arraystretch{1.5}
		\scalebox{0.8}{%
			\begin{tabular*}{1.25\textwidth}{p{0pt}cp{0pt}|@{\extracolsep{\fill}}ccccccc}
				\hline
				& $m-d_{2^k,4J+2}$ && $0$ & $1$ & $2$ & $3$ & $4$ & $5$ & $M$\\
				\hline
				& $\nu\big(Z_{2^k,4J+2}(m)\big)$ && $0$ & $1$ & $\ge 3$ & $\ge 4$ & $\ge 5$ & $\ge 6$ & $\ge (M+1)$\\
				\hline
				& $\nu\big(Z_{2^k,4J+3}(m)\big)$ && $\infty$ & $0$ & $\ge 2$ & $\ge 3$ & $\ge 4$ & $\ge 5$ & $\ge (M+0)$\\
				\hline
				&  && $\infty$ & $0$ & $1$ & $\ge 3$ & $\ge 4$ & $\ge 5$ & $\ge (M+0)$\\
				&  && $\infty$ & $\infty$ & $1$ & $\ge 3$ & $\ge 4$ & $\ge 5$ & $\ge (M+0)$\\
				& $\nu\big(Z_{2^k,4J+4}(m)\big)$ && $\infty$ & $0$ & $\ge 2$ & $\ge 3$ & $\ge 4$ & $\ge 5$ & $\ge (M+0)$\\
				\hline
			\end{tabular*}
		}
	\end{align*}
	For $j=4J+5$, we require this table:
	\begin{align*}
		\renewcommand\arraystretch{1.5}
		\scalebox{0.8}{%
			\begin{tabular*}{1.25\textwidth}{p{0pt}cp{0pt}|@{\extracolsep{\fill}}ccccccc}
				\hline
				& $m-d_{2^k,4J+3}$ && $0$ & $1$ & $2$ & $3$ & $4$ & $5$ & $M$\\
				\hline
				& $\nu\big(Z_{2^k,4J+3}(m)\big)$ && $0$ & $\ge 2$ & $\ge 3$ & $\ge 4$ & $\ge 5$ & $\ge 6$ & $\ge (M+1)$\\
				\hline
				& $\nu\big(Z_{2^k,4J+4}(m)\big)$ && $0$ & $\ge 2$ & $\ge 3$ & $\ge 4$ & $\ge 5$ & $\ge 6$ & $\ge (M+1)$\\
				\hline
				&  && $\infty$ & $0$ & $\ge 2$ & $\ge 3$ & $\ge 4$ & $\ge 5$ & $\ge (M+0)$\\
				&  && $\infty$ & $1$ & $\ge 3$ & $\ge 4$ & $\ge 5$ & $\ge 6$ & $\ge (M+1)$\\
				& $\nu\big(Z_{2^k,4J+5}(m)\big)$ && $\infty$ & $0$ & $\ge 2$ & $\ge 3$ & $\ge 4$ & $\ge 5$ & $\ge (M+0)$\\
				\hline
			\end{tabular*}
		}
	\end{align*}
    Concrete analyses can be mimicked by consulting the $i=4I+2$ case in the proof of Theorem \ref{th:2-adic-Z-i-0}, and we will omit the details.
\end{proof}

\section{New auxiliary functions and the associated minimal $\xi$-powers}\label{sec:new-aux-fnc}

All of the work above has revolved around the generating function for the function $PDO(n)$. However, we keep in mind that Theorem \ref{mainthm} is really focused on the internal congruences for the $PDO$ function. To capture this nature, let us introduce a new family of auxiliary functions for $k\ge 3$,
\begin{align*}
    \Phi_k=\Phi_k(q):=\gamma(q)^{2^{k}}\left(\sum_{n=0}^\infty PDO(2^{k+2}n)q^n - \sum_{n=0}^\infty PDO(2^{k}n)q^n\right).
\end{align*}
In light of \eqref{eq:def-L}, we have
\begin{align*} 
	\Phi_k &= \Lambda_{k+2} - \gamma^{3\cdot 2^{k-2}} \Lambda_k\notag\\
    &= \Lambda_{k+2} - \left(\gamma^6 \right)^{ 2^{k-3}} \Lambda_k.
\end{align*}

\begin{theorem}\label{th:F-xi}
	For any $k\ge 3$,
	\begin{align} \label{eq:Fk-xi-poly}
		\Phi_k \in \mathbb{Z}[\xi].
	\end{align}
	More precisely, if we write
	\begin{align}\label{eq:Fk-xi}
		\Phi_k:=\sum_{m} F_k(m)\xi^m,
	\end{align}
	then
	\begin{align}\label{eq:F1-xi}
		\Phi_3 &= 34012224 \xi^{14} - 396809280 \xi^{15} + 2061728640 \xi^{16} - 6195823488 \xi^{17}\notag\\
		&\quad + 11887534080 \xi^{18} - 15250636800 \xi^{19} + 13309968384 \xi^{20}\notag\\
		&\quad  - 7840727040 \xi^{21} + 2994733056 \xi^{22} - 671088640 \xi^{23}\notag\\
		&\quad  + 67108864 \xi^{24},
	\end{align}
	and for $k\ge 4$, we recursively have
	\begin{align}\label{eq:F-xi-rec}
		\Phi_k = \sum_{\ell} F_{k-1}(\ell) \cdot \zeta_{2^{k-1},\ell},
	\end{align}
	where $\zeta_{2^{k-1},\ell}$ is given by \eqref{eq:def-zeta}.
\end{theorem}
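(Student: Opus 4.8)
The proof will mirror that of Theorem~\ref{th:L-xi}, the single arithmetic engine being the elementary operator identity $U\big(g(q)\,h(q^2)\big) = U\big(g(q)\big)\,h(q)$, valid for arbitrary formal power series $g,h$ and already used implicitly in the proof of Theorem~\ref{th:L-xi}. The induction is anchored at $k=3$, where a direct computation is unavoidable since $\Phi_k$ is only defined for $k\ge 3$ and there is no $\Phi_2$ to recurse from.

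For the base case \eqref{eq:F1-xi}, I would specialize the identity $\Phi_k = \Lambda_{k+2} - (\gamma^6)^{2^{k-3}}\Lambda_k$ to $k=3$, giving $\Phi_3 = \Lambda_5 - \gamma^6\Lambda_3$. Since $\Lambda_3$, $\Lambda_5$, and $\gamma^6$ have already been expressed as explicit polynomials in $\xi$ in \eqref{eq:L1-xi}, \eqref{eq:L3-xi}, and \eqref{eq:gamma6-xi-identity} respectively, it remains only to substitute these, carry out the product $\gamma^6\Lambda_3$, and collect like powers of $\xi$. This is a finite, purely mechanical computation; as a consistency check, the lowest-degree term of $\gamma^6\Lambda_3$ is $9\cdot 59049\,\xi^{14}=531441\,\xi^{14}$, so the leading coefficient of $\Phi_3$ is $34543665-531441=34012224$, matching \eqref{eq:F1-xi}.

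For the recursive step ($k\ge 4$), write $P_j := \sum_{n\ge 0} PDO(2^j n)q^n$, so that $U(P_j)=P_{j+1}$ and $\Phi_k=\gamma^{2^k}\big(P_{k+2}-P_k\big)$. Using linearity of $U$ together with $P_{k+2}-P_k = U(P_{k+1}-P_{k-1})$ and then the operator identity in the reverse direction, I would pull the factor $\gamma(q)^{2^k}$ inside the unitization as $\gamma(q^2)^{2^k}$:
\[
\Phi_k = \gamma(q)^{2^k}\,U\big(P_{k+1}-P_{k-1}\big) = U\big(\gamma(q^2)^{2^k}\,(P_{k+1}-P_{k-1})\big).
\]
Factoring $\gamma(q^2)^{2^k} = \big(\gamma(q^2)^2\big)^{2^{k-1}} = \big(\kappa\,\gamma(q)\big)^{2^{k-1}} = \kappa^{2^{k-1}}\gamma(q)^{2^{k-1}}$ and recognizing $\gamma(q)^{2^{k-1}}(P_{k+1}-P_{k-1}) = \Phi_{k-1}$ yields the compact recursion $\Phi_k = U\big(\kappa^{2^{k-1}}\,\Phi_{k-1}\big)$.

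Finally, by the inductive hypothesis $\Phi_{k-1}=\sum_\ell F_{k-1}(\ell)\xi^\ell \in \mathbb{Z}[\xi]$; expanding and invoking linearity of $U$ once more gives $\Phi_k = \sum_\ell F_{k-1}(\ell)\,U\big(\kappa^{2^{k-1}}\xi^\ell\big) = \sum_\ell F_{k-1}(\ell)\,\zeta_{2^{k-1},\ell}$, which is exactly \eqref{eq:F-xi-rec}; the membership $\Phi_k\in\mathbb{Z}[\xi]$ in \eqref{eq:Fk-xi-poly} then follows immediately from Theorem~\ref{th:kx-xi-poly}, closing the induction. I do not anticipate a genuine obstacle, as every step parallels Theorem~\ref{th:L-xi}; the only points requiring care are bookkeeping the exponents—here one obtains $\kappa^{2^{k-1}}$ rather than the $\kappa^{2^{k-3}}$ appearing in the $\Lambda$ recursion, because $\Phi_k$ carries the weight $\gamma^{2^k}$ instead of $\gamma^{2^{k-2}}$—and confirming that the two telescoped generating functions $P_{k+1}$ and $P_{k-1}$ reassemble precisely into $\Phi_{k-1}$ after the factorization.
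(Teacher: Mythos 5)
Your proposal is correct and follows essentially the same route as the paper: the base case via $\Phi_3=\Lambda_5-\gamma^6\Lambda_3$ with the explicit polynomials \eqref{eq:L1-xi}, \eqref{eq:L3-xi}, \eqref{eq:gamma6-xi-identity}, and the recursion via $\Phi_k=U\big(\kappa^{2^{k-1}}\Phi_{k-1}\big)$ obtained by pulling $\gamma(q)^{2^k}$ inside the unitization and factoring off $\kappa^{2^{k-1}}$. The only cosmetic difference is that the paper deduces $\Phi_k\in\mathbb{Z}[\xi]$ directly from $\Phi_k=\Lambda_{k+2}-(\gamma^6)^{2^{k-3}}\Lambda_k$ rather than from the recursion together with Theorem~\ref{th:kx-xi-poly}; both are valid.
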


\begin{proof}
	Since both $\Lambda_k$ and $\Lambda_{k+2}$ are in $\mathbb{Z}[\xi]$ by \eqref{eq:Lk-xi-poly}, while $\gamma^6$ is also in $\mathbb{Z}[\xi]$ as we have shown in \eqref{eq:gamma6-xi-identity}, it follows that $\Phi_k \in \mathbb{Z}[\xi]$. In particular, $\Phi_3 = \Lambda_5 - \gamma^6 \Lambda_3$. Applying \eqref{eq:L1-xi} and \eqref{eq:L3-xi} together with \eqref{eq:gamma6-xi-identity} gives \eqref{eq:F1-xi}. For \eqref{eq:F-xi-rec}, we note that
	\begin{align*}
		\Phi_k&=U\left(\gamma(q^2)^{2^{k}}\left(\sum_{n=0}^\infty PDO(2^{k+1}n)q^n - \sum_{n=0}^\infty PDO(2^{k-1}n)q^n\right)\right)\\
		&= U\left(\left(\frac{\gamma(q^2)^2}{\gamma(q)}\right)^{2^{k-1}} \gamma(q)^{2^{k-1}}\left(\sum_{n=0}^\infty PDO(2^{k+1}n)q^n - \sum_{n=0}^\infty PDO(2^{k-1}n)q^n\right)\right)\\
		&= U\left(\kappa^{2^{k-1}}\Phi_{k-1}\right).
	\end{align*}
	Finally, we recall \eqref{eq:Fk-xi} and invoke \eqref{eq:def-zeta} to obtain the desired recurrence.
\end{proof}

Next, in analogy to Theorem \ref{th:Z-nonvanishing}, we show that the lower powers of $\xi$ in each $\Phi_k$ shall vanish.

\begin{theorem}\label{th:F-nonvanishing}
	For any $k\ge 3$, define
	\begin{align*}
		\tau_{k}:=\begin{cases}
			7\cdot 2^{2K-3}-\tfrac{2}{3}\big(4^{K-2}-1\big), & \text{if $k=2K-1$},\\[6pt]
			7\cdot 2^{2K-2}-\tfrac{1}{3}\big(4^{K-1}-1\big), & \text{if $k=2K$}.
		\end{cases}
	\end{align*}
	Then for any $m$ with $0\le m< \tau_k$, we have
	\begin{align*}
		F_k(m) = 0.
	\end{align*}
\end{theorem}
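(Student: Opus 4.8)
The plan is to argue by induction on $k$, using the recurrence \eqref{eq:F-xi-rec} together with the minimal $\xi$-power data recorded in Theorem \ref{th:Z-nonvanishing}. The engine of the whole argument is the observation that $\tau_k$ satisfies a one-step recurrence of its own. Since $2^{k-1}$ is even, Theorem \ref{th:Z-nonvanishing} gives, for every $\ell\ge 0$,
\begin{align*}
	d_{2^{k-1},\ell} = 5\cdot 2^{k-2} + \left\lceil \tfrac{\ell}{2}\right\rceil,
\end{align*}
which is a nondecreasing function of $\ell$. First I would verify that the closed form defining $\tau_k$ satisfies
\begin{align*}
	\tau_k = 5\cdot 2^{k-2} + \left\lceil \tfrac{\tau_{k-1}}{2}\right\rceil = d_{2^{k-1},\tau_{k-1}} \qquad (k\ge 4).
\end{align*}
This is a routine but parity-sensitive computation: one checks that $\tau_{k-1}$ is even precisely when $k-1$ is odd (each branch of the definition splits into two pieces that are individually even or odd in the requisite way, using that $4\equiv 1\pmod 3$ forces $\tfrac{1}{3}(4^{m}-1)\in\mathbb{Z}$), so the ceiling unfolds cleanly and the displayed identity reduces in each case to an elementary identity such as $\tfrac{1}{3}(4^{K-1}-4^{K-2}) = 4^{K-2}$.

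With this recurrence in hand the induction is immediate. For the base case $k=3$, the explicit expression \eqref{eq:F1-xi} shows that $\Phi_3$ begins at $\xi^{14}$, and $\tau_3 = 14$, so $F_3(m)=0$ for all $m<\tau_3$. For the inductive step, fix $k\ge 4$ and assume $F_{k-1}(m)=0$ for all $m<\tau_{k-1}$. By \eqref{eq:F-xi-rec} the terms with $\ell<\tau_{k-1}$ drop out, so
\begin{align*}
	\Phi_k = \sum_{\ell\ge \tau_{k-1}} F_{k-1}(\ell)\,\zeta_{2^{k-1},\ell}.
\end{align*}
By Theorem \ref{th:Z-nonvanishing} each surviving $\zeta_{2^{k-1},\ell}$ contains no power of $\xi$ below $\xi^{d_{2^{k-1},\ell}}$, and by monotonicity in $\ell$ we have $d_{2^{k-1},\ell}\ge d_{2^{k-1},\tau_{k-1}} = \tau_k$. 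A finite sum of polynomials, none of which contains a power below $\xi^{\tau_k}$, again contains no such power, whence $F_k(m)=0$ for all $m<\tau_k$, completing the induction.

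I expect the only genuine obstacle to be the parity bookkeeping in verifying $\tau_k = d_{2^{k-1},\tau_{k-1}}$: one must split correctly into the cases $k$ even and $k$ odd, track the induced parity of $\tau_{k-1}$ in order to resolve $\lceil \tau_{k-1}/2\rceil$, and confirm that the two closed-form branches of the definition are mutually compatible under the recurrence. Everything downstream of that identity — the use of \eqref{eq:F-xi-rec} and the monotonicity of $d_{2^{k-1},\ell}$ in its second argument — is direct. I would emphasize that we never need $F_{k-1}(\tau_{k-1})\ne 0$; only the vanishing below $\tau_{k-1}$ is used, so the induction closes without any claim of tightness, and in particular any cancellation among the $\zeta_{2^{k-1},\ell}$ is harmless for the stated lower bound.
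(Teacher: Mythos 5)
Your proposal is correct and is essentially the paper's own argument: both proceed by induction on $k$ via the recurrence \eqref{eq:F-xi-rec}, using Theorem \ref{th:Z-nonvanishing} to locate the minimal $\xi$-power of each $\zeta_{2^{k-1},\ell}$ and checking that $d_{2^{k-1},\tau_{k-1}}=\tau_k$ with all other surviving terms starting no lower. The only difference is presentational — you package the parity case analysis into the single formula $d_{2^{k-1},\ell}=5\cdot 2^{k-2}+\lceil \ell/2\rceil$ and advance one step at a time, whereas the paper alternates explicitly between the even and odd branches — and your closing observation that only the vanishing below $\tau_{k-1}$ (not nonvanishing at $\tau_{k-1}$) is needed matches the paper's ``starts with \emph{at least}'' phrasing.
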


\begin{remark}\label{rem:tau-mod-4}
	A straightforward computation reveals that 
	\begin{align}
		\tau_{2K-1} &\equiv 2 \pmod{4},\label{eq:tau-odd-mod-4}\\
		\tau_{2K} &\equiv 3 \pmod{4},\label{eq:tau-even-mod-4}
	\end{align}
	for any choice of $K\ge 2$.
\end{remark}

\begin{proof}
	From \eqref{eq:F1-xi}, we see that $\Phi_3$ starts with the power $\xi^{14}$, while
	\begin{align*}
		14 = 7\cdot 2^1 - \tfrac{2}{3}\big(4^0-1\big).
	\end{align*}
	Now we assume that the theorem is true for a certain $k=2K-1$, and we prove inductively for $k=2K$ and $2K+1$.
	
	For $k=2K$, we know from \eqref{eq:F-xi-rec} that
	\begin{align}\label{eq:Phi-2K}
		\Phi_{2K} = F_{2K-1}(\tau_{2K-1}) \cdot \zeta_{2^{2K-1},\tau_{2K-1}} + \cdots.
	\end{align}
	By virtue of Theorem \ref{th:Z-nonvanishing}, $\zeta_{2^{2K-1},\tau_{2K-1}}$ starts with the power $\xi^{d}$ where
	\begin{align*}
		d&=d_{2^{2K-1},\tau_{2K-1}}\\
		&=5\cdot 2^{2K-2}+\tfrac{1}{2}\tau_{2K-1}\\
		&=5\cdot 2^{2K-2}+7\cdot 2^{2K-4}-\tfrac{1}{3}\big(4^{K-2}-1\big)\\
		&=7\cdot 2^{2K-2}-\tfrac{1}{3}\big(4^{K-1}-1\big)\\
		&=\tau_{2K}.
	\end{align*}
	Here we make use of the fact that both $2^{2K-1}$ and $\tau_{2K-1}$ are even. In the meantime, each of the $\zeta \in \mathbb{Z}[\xi]$ component in the remaining summands in \eqref{eq:Phi-2K} starts with a power higher than $\xi^d$. Hence, $\Phi_{2K}$ starts with \emph{at least} $\xi^{\tau_{2K}}$.
	
	For $k=2K+1$, we also deduce from \eqref{eq:F-xi-rec} that
	\begin{align}\label{eq:Phi-2K+1}
		\Phi_{2K+1} = F_{2K}(\tau_{2K}) \cdot \zeta_{2^{2K},\tau_{2K}} + F_{2K}(\tau_{2K}+1) \cdot \zeta_{2^{2K},\tau_{2K}+1} + \cdots.
	\end{align}
	Invoking Theorem \ref{th:Z-nonvanishing} and noting that $\tau_{2K}$ is odd, it follows that $\zeta_{2^{2K},\tau_{2K}}$ starts with the power $\xi^{d}$ where
	\begin{align*}
		d&=d_{2^{2K},\tau_{2K}}\\
		&=5\cdot 2^{2K-1}+\tfrac{1}{2}\big(\tau_{2K}-1\big)+1\\
		&=5\cdot 2^{2K-1}+7\cdot 2^{2K-3}-\tfrac{1}{6}\big(4^{K-1}-1\big)+\tfrac{1}{2}\\
		&=7\cdot 2^{2K-1}-\tfrac{2}{3}\big(4^{K-1}-1\big)\\
		&=\tau_{2K+1}.
	\end{align*}
	For the second summand in \eqref{eq:Phi-2K+1}, we find that $\zeta_{2^{2K},\tau_{2K}+1}$ starts with the power $\xi^{d'}$ where
	\begin{align*}
		d'&=d_{2^{2K},\tau_{2K}+1}\\
		&=5\cdot 2^{2K-1}+\tfrac{1}{2}\big(\tau_{2K}+1\big)\\
		&=\tau_{2K+1},
	\end{align*}
	according to a similar computation. Furthermore, each of the $\zeta \in \mathbb{Z}[\xi]$ components in the remaining summands in \eqref{eq:Phi-2K+1} starts with a power higher than $\xi^d=\xi^{d'}$. Hence, we can claim that $\Phi_{2K+1}$ starts with \emph{at least}  $\xi^{\tau_{2K+1}}$.
\end{proof}

\section{$2$-Adic analysis for $\Phi$}\label{sec:2-adic-Phi}

We are now in a position to prove our main result, Theorem \ref{mainthm}. To do so, we only need to confirm that for each $K\ge 1$,
\begin{align*}
	\nu\big(F_{2K+1}(m)\big)\ge 2K+3
\end{align*}
whenever $m\ge \tau_{2K+1}$. In what follows, we first manually analyze the initial cases where $K\in\{1,2\}$ and then move on to general $K$ by induction.

\subsection{Initial cases}\label{sec:Phi-2-adic-initial}

Recall that $\Phi_3$ was already formulated in \eqref{eq:F1-xi}. Meanwhile, we may obtain an explicit expression in $\mathbb{Z}[\xi]$ (containing $43$ terms!) for $\Phi_5$ by applying the recurrence \eqref{eq:F-xi-rec} twice. Consequently, we get the following $2$-adic evaluations, where $\tau$ stands for $\tau_3=14$ or $\tau_5=54$ accordingly:
\begin{align*}
	\renewcommand\arraystretch{1.5}
	\begin{tabular*}{\textwidth}{p{1pt}cp{1pt}|@{\extracolsep{\fill}}cccc}
		\hline
		& $m-\tau$ && $0$ & $1$ & $2$ & $M$ \\
		\hline
		& $\nu\big(F_3(m)\big)$ && $6$ & $6$ & $7$ & $\ge (M+4)$ \\
		\hline
		& $\nu\big(F_5(m)\big)$ && $7$ & $7$ & $9$ & $\ge (M+8)$ \\
		\hline
	\end{tabular*}
\end{align*}
In the above table, the last column is true for all $M\ge 3$. So we indeed have the following two congruences, with the former being even stronger than the general scenario in Theorem \ref{mainthm}:
\begin{align}
	PDO(2^3n) \equiv PDO(2^5n) \pmod{2^6},\label{eq:2^3-2^5}\\
	PDO(2^5n) \equiv PDO(2^7n) \pmod{2^7}.\label{eq:2^5-2^7}
\end{align}

\subsection{Induction}

Now we perform induction on $K\ge 2$ and establish the following lower bounds for the $2$-adic evaluations.

\begin{theorem}
	For any $K\ge 2$, it is true that
	\begin{align}
		\nu\big(F_{2K+1}(\tau_{2K+1})\big) &\ge 2K+3,\label{eq:F-nu-0}
	\end{align}
	and that for $M\ge 1$,
	\begin{align}\label{eq:F-nu-m}
		\nu\big(F_{2K+1}(\tau_{2K+1}+M)\big) &\ge 2K+M+2.
	\end{align}
\end{theorem}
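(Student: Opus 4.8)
The plan is to prove \eqref{eq:F-nu-0} and \eqref{eq:F-nu-m} by induction on $K\ge 2$, taking the base case $K=2$ from the explicit evaluation of $\Phi_5$ recorded in Section~\ref{sec:Phi-2-adic-initial}. For the inductive step I would pass from $\Phi_{2K+1}$ to $\Phi_{2K+3}$ by applying the recurrence \eqref{eq:F-xi-rec} twice, factoring through the intermediate even-index function $\Phi_{2K+2}$. Since $F_k(\tau_k+s)=\sum_r F_{k-1}(\tau_{k-1}+r)\,Z_{2^{k-1},\tau_{k-1}+r}(\tau_k+s)$, each application amounts to convolving the inductive valuation profile of $\Phi_{k-1}$ against the profiles of the series $\zeta_{2^{k-1},\ell}$ furnished by Theorem~\ref{th:Z-2-adic}, while tracking the minimal powers through Theorem~\ref{th:Z-nonvanishing} and the parities of $\tau_{k-1}$ through Remark~\ref{rem:tau-mod-4} (here $\tau_{2K+1}\equiv 2$ and $\tau_{2K+2}\equiv 3\pmod 4$ by \eqref{eq:tau-odd-mod-4}--\eqref{eq:tau-even-mod-4}). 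The combination of contributions landing on a fixed power of $\xi$ is then controlled by \eqref{eq:2-adic-add}, exactly as in the tables of Theorem~\ref{th:2-adic-Z-i-0}.

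Concretely, I would first establish an intermediate profile for the even-index $\Phi_{2K+2}$. Because $\tau_{2K+1}$ is even, the starting powers $d_{2^{2K+1},\tau_{2K+1}+r}$ increase by $\lceil r/2\rceil$, and a short valuation table built as in Theorem~\ref{th:2-adic-Z-i-0} yields $\nu\big(F_{2K+2}(\tau_{2K+2}+s)\big)\ge 2K+3$ for $s\in\{0,1\}$ and $\ge 2K+s+3$ for $s\ge 2$. Feeding this even profile into the second application of \eqref{eq:F-xi-rec}, where now $\tau_{2K+2}$ is odd so the starting powers increase by $\lfloor r/2\rfloor$, already delivers the bound \eqref{eq:F-nu-m} at every offset $M\ge 1$: for each such $M$ two or three contributions of valuation $2K+M+4$ appear and every other contribution is higher, so the trivial estimate $\nu(\sum)\ge\min$ suffices and no delicate cancellation is needed.

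The genuine difficulty is isolated at the single leading coefficient, i.e.\ the bound \eqref{eq:F-nu-0}. Unwinding the two applications of the recurrence, exactly three coefficients of $\Phi_{2K+1}$ feed into $F_{2K+3}(\tau_{2K+3})$, giving
\[
F_{2K+3}(\tau_{2K+3}) = 2^{2K+3}\big(W_0 A + W_1 B + W_2 C\big),
\]
where $A,B,C$ are $F_{2K+1}(\tau_{2K+1}),\,F_{2K+1}(\tau_{2K+1}+1),\,F_{2K+1}(\tau_{2K+1}+2)$ divided by $2^{2K+3}$ (so $A,B\in\mathbb{Z}$ and $C$ is even by the inductive bound \eqref{eq:F-nu-m}), and $W_0,W_1,W_2$ are odd, each being a product of leading $\zeta$-coefficients that are odd by Theorem~\ref{th:Z-nonvanishing}. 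Pure valuation bookkeeping only gives $\nu\ge 2K+4$ here, one power short; to reach $2K+5$ one must show the second-order cancellation $W_0A+W_1B+W_2C\equiv 0\pmod 4$, which is invisible to the valuation estimates used for all other coefficients. The base data confirm this is a real phenomenon: for $K=2$ the bound is tight, $\nu\big(F_5(\tau_5)\big)=7=2\cdot 2+3$.

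The hard part, therefore, is overcoming this leading cancellation, and I would do so by strengthening the inductive statement itself. Beyond the valuation bounds, the enriched hypothesis would carry the residues modulo $4$ of the normalized leading coefficients $A,B,C$ (equivalently, the top coefficients of $\Phi_{2K+1}$ to $2$-adic precision $2^{2K+5}$) together with the residues modulo $4$ of the leading $\zeta$-coefficients entering $W_0,W_1,W_2$; the latter are governed by the very recurrences \eqref{eq:zeta-rec-i}--\eqref{eq:zeta-rec-j} that drive Theorem~\ref{th:Z-nonvanishing} and so are computable in closed form. The crux of the argument is then to verify that the two-step recurrence reproduces the congruence $W_0A+W_1B+W_2C\equiv 0\pmod 4$ at the next stage, anchoring the refined residues on the explicit expressions for $\Phi_3$ and $\Phi_5$ in \eqref{eq:F1-xi} and Section~\ref{sec:Phi-2-adic-initial}. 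Once this propagation of the leading-coefficient congruence is in place, \eqref{eq:F-nu-0} follows and the induction closes.
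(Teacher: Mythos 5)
Your treatment of \eqref{eq:F-nu-m} is essentially the paper's argument: two applications of \eqref{eq:F-xi-rec}, the shift formulas $d=\tau'+\lceil r/2\rceil$ and $d=\tau+\lfloor s/2\rfloor$ coming from Theorem \ref{th:Z-nonvanishing} and the parities in Remark \ref{rem:tau-mod-4}, and then valuation bookkeeping via Theorem \ref{th:Z-2-adic} and \eqref{eq:2-adic-add}. Routing through an explicit intermediate profile for the even-index $\Phi_{2K+2}$ rather than composing the two steps into a single triple product is a cosmetic difference, and your profile checks out against the handful of sporadic low-offset cases that the paper enumerates explicitly.

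The genuine gap is in \eqref{eq:F-nu-0}. You correctly diagnose that valuation bookkeeping on the leading coefficient falls short (in fact the trivial estimate only yields $\nu\ge 2K+3$ for $F_{2K+3}(\tau_{2K+3})$, since the two contributions $F_{2K+2}(\tau_{2K+2})$ and $F_{2K+2}(\tau_{2K+2}+1)$ each carry only $\nu\ge 2K+3$ and are multiplied by odd leading $\zeta$-coefficients), and that some cancellation must be exhibited. But your proposed remedy --- enriching the induction hypothesis with the residues modulo $4$ of the normalized leading coefficients and of the relevant $\zeta$-coefficients, and then verifying that the two-step recurrence propagates the congruence $W_0A+W_1B+W_2C\equiv 0\pmod 4$ --- is precisely the step you do not carry out, and it is not at all clear it closes: establishing that congruence at stage $K+1$ would require mod-$4$ (or finer) control of \emph{several} coefficients of $\Phi_{2K+1}$ beyond the leading one, so the set of invariants you must track threatens to grow at each step. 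The paper avoids all of this with a short structural argument: once \eqref{eq:F-nu-m} is proved at level $K$, one has
\begin{align*}
\Phi_{2K+1}\equiv F_{2K+1}(\tau_{2K+1})\,\xi^{\tau_{2K+1}}\pmod{2^{2K+3}},
\end{align*}
and the constant term (in $q$) of $\Phi_{2K+1}$ vanishes identically because $PDO(2^{2K+3}\cdot 0)-PDO(2^{2K+1}\cdot 0)=0$, while $\xi(q)$ has constant term $1$; comparing constant terms forces $F_{2K+1}(\tau_{2K+1})\equiv 0\pmod{2^{2K+3}}$. This single observation replaces your entire refined-residue program, and without it (or a completed version of your program) the induction does not close, since \eqref{eq:F-nu-0} at level $K$ is needed as input to the sporadic cases of \eqref{eq:F-nu-m} at level $K+1$.
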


Note that the $K=2$ case was already covered in Sect.~\ref{sec:Phi-2-adic-initial}. So in the sequel, we first prove the inequality \eqref{eq:F-nu-m} for $K\ge 3$ under the inductive assumption that it is true for $K-1$. For the sake of notational convenience, let us denote
\begin{align*}
	\tau'':=\tau_{2K-1},\qquad \tau':=\tau_{2K}, \qquad \tau:=\tau_{2K+1}.
\end{align*}
Also, we write
\begin{align*}
	\{Z''(j,m):j,m\ge 0\}&:=\{Z_{2^{2K-1},j}(m):j,m\ge 0\},\\
	\{Z'(j,m):j,m\ge 0\}&:=\{Z_{2^{2K},j}(m):j,m\ge 0\}.
\end{align*}

\begin{proof}[Proof of \eqref{eq:F-nu-m}]
	To produce $\Phi_{2K+1}$ from $\Phi_{2K-1}$, we need to employ the recurrence \eqref{eq:F-xi-rec} twice. Writing
	\begin{align*}
		\Phi_{2K-1} = \sum_{M''\ge 0} F_{2K-1}(\tau'' + M'')\xi^{\tau''+M''},
	\end{align*}
	we deduce from \eqref{eq:F-xi-rec} that
	\begin{align*}
		\Phi_{2K} = \sum_{M''\ge 0} F_{2K-1}(\tau'' + M'')\zeta_{2^{2K-1},\tau'' + M''}.
	\end{align*}
	Recall that $\tau''$ is even according to \eqref{eq:tau-odd-mod-4}. It then follows from Theorems \ref{th:Z-nonvanishing} and \ref{th:F-nonvanishing} that the minimal degree of the $\xi$-powers in $\zeta_{2^{2K-1},\tau'' + M''}$ is $\tau'+\lfloor\tfrac{M''+1}{2}\rfloor$, i.e.,
	\begin{align}\label{eq:d-tau-2}
		d_{2^{2K-1},\tau'' + M''} = \tau'+\lfloor\tfrac{M''+1}{2}\rfloor.
	\end{align}
	Hence,
	\begin{align*}
		\Phi_{2K} = \sum_{\substack{M''\ge 0\\M'\ge \lfloor\frac{M''+1}{2}\rfloor}} F_{2K-1}(\tau'' + M'')Z''(\tau'' + M'',\tau'+M')\xi^{\tau'+M'}.
	\end{align*}
	Applying the recurrence \eqref{eq:F-xi-rec} once again gives us that
	\begin{align*}
		\Phi_{2K+1} = \sum_{\substack{M''\ge 0\\M'\ge \lfloor\frac{M''+1}{2}\rfloor}} F_{2K-1}(\tau'' + M'')Z''(\tau'' + M'',\tau'+M')\zeta_{2^{2K},\tau' + M'}.
	\end{align*}
	Noting that $\tau'$ is odd according to \eqref{eq:tau-even-mod-4}, Theorems \ref{th:Z-nonvanishing} and \ref{th:F-nonvanishing} then imply that the minimal degree of the $\xi$-powers in $\zeta_{2^{2K},\tau' + M'}$ is $\tau+\lfloor\tfrac{M'}{2}\rfloor$, i.e.,
	\begin{align}\label{eq:d-tau-1}
		d_{2^{2K},\tau' + M'} = \tau'+\lfloor\tfrac{M'}{2}\rfloor.
	\end{align}
	Hence,
	\begin{align*}
		\Phi_{2K+1} = \sum_{\substack{M''\ge 0\\M'\ge \lfloor\frac{M''+1}{2}\rfloor\\M\ge \lfloor\frac{M'}{2}\rfloor}} C(M,M'',M'') \xi^{\tau+M},
	\end{align*}
	where
	\begin{align*}
		C(M,M'',M''):=F_{2K-1}(\tau'' + M'')Z''(\tau'' + M'',\tau'+M')Z'(\tau' + M',\tau+M).
	\end{align*}
	
	Note that to complete the inductive step, it is sufficient to show the inequality
	\begin{align}\label{eq:nu-C}
		\nu\big(C(M,M'',M'')\big) \ge 2K+M+2
	\end{align}
	for every tuple $(M,M'',M'')$ such that $M''\ge 0$, $M'\ge \lfloor\tfrac{M''+1}{2}\rfloor$ and $M\ge \lfloor\tfrac{M'}{2}\rfloor$.
	
	We first consider the following two generic cases where \textbf{(i).}~$M''\ge 2$ and \textbf{(ii).}~$M''=0$ or $1$ and $M'\ge 3$. By our inductive assumption,
	\begin{align*}
		\nu\big(F_{2K-1}(\tau'' + M'')\big) \ge 2K + M''.
	\end{align*}
	Also, by a weaker form of Theorem \ref{th:Z-2-adic} with recourse to \eqref{eq:d-tau-2} and \eqref{eq:d-tau-1}, respectively, we have
	\begin{align*}
		\nu\big(Z''(\tau'' + M'',\tau'+M')\big) &\ge M'-\lfloor\tfrac{M''+1}{2}\rfloor,\\
		\nu\big(Z'(\tau' + M',\tau+M)\big) &\ge M-\lfloor\tfrac{M'}{2}\rfloor.
	\end{align*}
	It follows that
	\begin{align*}
		\nu\big(C(M,M'',M'')\big) \ge 2K+M+\big(M''-\lfloor\tfrac{M''+1}{2}\rfloor\big) + \big(M'-\lfloor\tfrac{M'}{2}\rfloor\big).
	\end{align*}
	Now \textbf{(i).}~if $M''\ge 2$, then $M'\ge 1$ so that $M''-\lfloor\tfrac{M''+1}{2}\rfloor\ge 1$ and that $M'-\lfloor\tfrac{M'}{2}\rfloor\ge 1$; \textbf{(ii).}~if $M''=0$ or $1$ and $M'\ge 3$, then $M''-\lfloor\tfrac{M''+1}{2}\rfloor= 0$ but $M'-\lfloor\tfrac{M'}{2}\rfloor\ge 2$. Thus, in both cases the inequality \eqref{eq:nu-C} holds.
	
	Now there are five sporadic cases to be investigated, and we work on them separately. Note that for \eqref{eq:F-nu-m}, we have the condition that $M\ge 1$.
	\begin{enumerate}[label={\textbf{(\alph*).}},leftmargin=*,labelsep=0cm,align=left]
		\item $(M',M'')=(0,0)$: In this case, it is known by the inductive assumption,
		\begin{align*}
			\nu\big(F_{2K-1}(\tau'' + 0)\big) \ge 2K + 1,
		\end{align*}
		that by Theorem \ref{th:Z-2-adic},
		\begin{align*}
			\nu\big(Z''(\tau'' + 0,\tau'+0)\big) = 0,
		\end{align*}
		and that also by Theorem \ref{th:Z-2-adic} (noting that $\tau'+0\equiv 3\pmod{4}$),
		\begin{align*}
			\nu\big(Z'(\tau' + 0,\tau+M)\big) \ge M+1.
		\end{align*}
	
		\item $(M',M'')=(1,0)$: In this case, it is known by the inductive assumption,
		\begin{align*}
			\nu\big(F_{2K-1}(\tau'' + 0)\big) \ge 2K + 1,
		\end{align*}
		that by Theorem \ref{th:Z-2-adic} (noting that $\tau''+0\equiv 2 \pmod{4}$),
		\begin{align*}
			\nu\big(Z''(\tau'' + 0,\tau'+1)\big) = 1,
		\end{align*}
		and that also by Theorem \ref{th:Z-2-adic} (noting that $\tau'+1\equiv 0\pmod{4}$),
		\begin{align*}
			\nu\big(Z'(\tau' + 1,\tau+M)\big) \ge M+1.
		\end{align*}
	
		\item $(M',M'')=(2,0)$: In this case, it is known by the inductive assumption,
		\begin{align*}
			\nu\big(F_{2K-1}(\tau'' + 0)\big) \ge 2K + 1,
		\end{align*}
		that by Theorem \ref{th:Z-2-adic},
		\begin{align*}
			\nu\big(Z''(\tau'' + 0,\tau'+2)\big) \ge 3,
		\end{align*}
		and that by a weaker form of Theorem \ref{th:Z-2-adic},
		\begin{align*}
			\nu\big(Z'(\tau' + 2,\tau+M)\big) \ge M-1.
		\end{align*}
	
		\item $(M',M'')=(1,1)$: In this case, it is known by the inductive assumption,
		\begin{align*}
			\nu\big(F_{2K-1}(\tau'' + 1)\big) \ge 2K + 1,
		\end{align*}
		that by Theorem \ref{th:Z-2-adic},
		\begin{align*}
			\nu\big(Z''(\tau'' + 1,\tau'+1)\big) = 0,
		\end{align*}
		and that also by Theorem \ref{th:Z-2-adic} (noting that $\tau'+1\equiv 0\pmod{4}$),
		\begin{align*}
			\nu\big(Z'(\tau' + 1,\tau+M)\big) \ge M+1.
		\end{align*}
	
		\item $(M',M'')=(2,1)$: In this case, it is known by the inductive assumption,
		\begin{align*}
			\nu\big(F_{2K-1}(\tau'' + 1)\big) \ge 2K + 1,
		\end{align*}
		that by Theorem \ref{th:Z-2-adic} (noting that $\tau''+1\equiv 3 \pmod{4}$),
		\begin{align*}
			\nu\big(Z''(\tau'' + 1,\tau'+2)\big) \ge 2,
		\end{align*}
		and that by a weaker form of Theorem \ref{th:Z-2-adic},
		\begin{align*}
			\nu\big(Z'(\tau' + 2,\tau+M)\big) \ge M-1.
		\end{align*}
	\end{enumerate}
	We find that in each of these cases, the inequality \eqref{eq:nu-C} is also valid, thereby closing our proof of \eqref{eq:F-nu-m}.
\end{proof}

Finally, for \eqref{eq:F-nu-0}, we shall make use of completely different logic without consulting the relevant recurrences.

\begin{proof}[Proof of \eqref{eq:F-nu-0}]
	Recall that we have shown that the $2$-adic evaluations satisfy
	\begin{align*}
		\nu\big(F_{2K+1}(\tau_{2K+1}+M)\big)\ge 2^{2K+3}
	\end{align*}
	whenever $M\ge 1$ in the proof of \eqref{eq:F-nu-m}. Modulo $2^{2K+3}$, we have
	\begin{align}\label{eq:Phi-2k+1-cong-2k+5}
		\Phi_{2K+1} \equiv F_{2K+1}(\tau_{2K+1})\cdot \xi^{\tau_{2K+1}} \pmod{2^{2K+3}}.
	\end{align}
	Note that the constant term on the left-hand side is $0$ since
	\begin{align*}
		\Phi_{2K+1}=\gamma(q)^{2^{2K+1}}\left(\sum_{n=0}^\infty PDO(2^{2K+3}n)q^n - \sum_{n=0}^\infty PDO(2^{2K+1}n)q^n\right),
	\end{align*}
	while within the parentheses it is clear that
	\begin{align*}
		PDO(2^{2K+3}\cdot 0)q^0 - PDO(2^{2K+1}\cdot 0)q^0 = 0.
	\end{align*}
	Meanwhile, the constant term on the right-hand side of \eqref{eq:Phi-2k+1-cong-2k+5} is $F_{2K+1}(\tau_{2K+1})$ as the constant term of $\xi(q)$ is $1$. Since the congruence \eqref{eq:Phi-2k+1-cong-2k+5} is valid, we must have that their constant terms are also congruent under the same modulus, so as to give us
	\begin{align*}
		F_{2K+1}(\tau_{2K+1}) \equiv 0 \pmod{2^{2K+3}},
	\end{align*}
	which is equivalent to \eqref{eq:F-nu-0}.
\end{proof}

\section{Conclusion}\label{sec:conclusion}

We close with two sets of comments. First, we remind the reader of one of the original goals for proving such internal congruences.  As was mentioned above, the second author \cite{Sel23} used two corollaries of Theorem \ref{mainthm} to prove the following Ramanujan-like congruences via induction on $\alpha$:
\begin{theorem}
For all $\alpha \geq 0$ and all $n\geq 0,$
\begin{align*}
PDO\big(2^\alpha(4n+3)\big) &\equiv 0 \pmod{4}, \\
PDO\big(2^\alpha(8n+7)\big) &\equiv 0 \pmod{8}.
\end{align*}
\end{theorem}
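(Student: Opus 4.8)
The plan is to prove both families by induction on $\alpha$, reducing an argument of the form $2^{\alpha}m$ (with $m$ odd) down to an argument with a small power of $2$, one factor of $4$ at a time, by invoking the internal congruences already in hand. The key observation is that the internal congruences are exactly ``exponent step-down'' relations: each one lets us replace $PDO(2^{\alpha}m)$ by $PDO(2^{\alpha-2}m)$ without changing the relevant residue. Thus for each family the entire infinite statement collapses onto a finite list of base cases, which then become the genuine content to be checked.

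For the first congruence I would use the relation $PDO(4N)\equiv PDO(N)\pmod 4$. Writing $m=4n+3$, for every $\alpha\ge 2$ we have $2^{\alpha}m = 4\cdot 2^{\alpha-2}m$, whence
\begin{align*}
	PDO\big(2^{\alpha}m\big)\equiv PDO\big(2^{\alpha-2}m\big)\pmod 4 .
\end{align*}
Iterating, every $\alpha\ge 2$ descends two steps at a time onto $\alpha\in\{0,1\}$, so it suffices to establish the two base cases $PDO(4n+3)\equiv 0\pmod 4$ and $PDO(8n+6)\equiv 0\pmod 4$; the induction then closes immediately.

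For the second congruence I would run the identical scheme modulo $8$, now driven by the $k=0$ instance of Theorem~\ref{mainthm}, namely $PDO(8N)\equiv PDO(2N)\pmod 8$ (the companion relation \eqref{cong_mod_8} gives the same step from the even side). Setting $m=8n+7$, for every $\alpha\ge 3$ one has $2^{\alpha}m=8\cdot 2^{\alpha-3}m$, so
\begin{align*}
	PDO\big(2^{\alpha}m\big)\equiv PDO\big(2^{\alpha-2}m\big)\pmod 8 ,
\end{align*}
and every $\alpha\ge 3$ reduces onto $\alpha\in\{1,2\}$. Hence it remains only to verify the base cases $\alpha\in\{0,1,2\}$, i.e.\ that $PDO(8n+7)$, $PDO(16n+14)$, and $PDO(32n+28)$ are all $\equiv 0\pmod 8$.

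The inductive bookkeeping above is entirely routine; the main obstacle lies in the base cases, which are \emph{not} consequences of the internal congruences and must be settled directly. For these I would perform low-order ($2$- and $4$-) dissections of the generating function $\delta=\sum_n PDO(n)q^n$ from \eqref{genfn_main}, isolate the arithmetic progressions $4n+3$, $8n+6$, $8n+7$, $16n+14$, $32n+28$, and establish the required $2$-adic vanishing of each coefficient class using the eta-quotient identities underlying the paper (together with the identity $\sum_n PDO(2n)q^n=\delta^2$ and the resulting modular expressions such as $\Lambda_2=3\xi^2-2\xi^3$). The difficulty is purely in isolating the odd residue classes $4n+3$ and $8n+7$, which do not appear among the even-index slices $\Lambda_k$ treated above and therefore require a fresh dissection; once these finitely many anchor congruences are in place, the full families follow for all $\alpha\ge 0$. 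I would also remark that feeding the sharper congruence \eqref{eq:2^3-2^5} into the same descent can strengthen the modulus in isolated small-$\alpha$ cases.
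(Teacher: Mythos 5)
Your descent on $\alpha$ via the internal congruences is exactly the route the paper indicates: it offers no argument for this theorem beyond stating that it was proved in \cite{Sel23} ``via induction on $\alpha$'' using corollaries of Theorem \ref{mainthm}, and your step-down $PDO(2^{\alpha}m)\equiv PDO(2^{\alpha-2}m)$ (mod $4$, resp.\ mod $8$) with base cases $\alpha\in\{0,1\}$, resp.\ $\alpha\in\{0,1,2\}$, is that same induction, correctly executed. The one caveat is that the anchor congruences you reduce to, e.g.\ $PDO(4n+3)\equiv 0\pmod 4$ and $PDO(8n+7)\equiv 0\pmod 8$, carry essentially all of the arithmetic content and are only sketched in your write-up; they are not consequences of anything in this paper and must be (and in \cite{Sel23} are) established by separate explicit dissections of the generating function.
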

\noindent It would be gratifying to see other cases of Theorem \ref{mainthm} used to assist in proving divisibility properties satisfied by $PDO(n)$ for higher powers of $2$.

Secondly, in the course of proving congruences modulo arbitrary powers for the coefficients of an eta-product
\begin{align*}
	\mathrm{H}(q) = \sum_{n=0}^\infty h(n)q^n,
\end{align*}
the usual strategy is to find a suitable basis $\{\xi_1,\xi_2,\ldots,\xi_L\}$ of the corresponding modular space such that each dissection slice, accompanied by a certain multiplier (usually an eta-product),
\begin{align*}
	\lambda_m \sum_{n=0}^\infty h(p^m n + t_m) q^n,
\end{align*}
can be represented as a polynomial in $\mathbb{Z}[\xi_1,\xi_2,\ldots,\xi_L]$. For example, when proving the congruences modulo powers of 5 for the partition function \cite{Wat1938} or the congruences modulo powers of $7$ for the distinct partition function \cite{GH1981}, two specific multipliers typically take turns showing up, i.e., $\lambda_{2M-1}=\lambda$ and $\lambda_{2M}=\lambda'$ for two certain series $\lambda$ and $\lambda'$. However, in our study here, the multipliers $\gamma$, $\gamma^2$, $\gamma^4$, $\gamma^8$, $\ldots$~never overlap. Meanwhile, an important outcome of cycling the multipliers in the previous studies is that it is typically sufficient to represent each degree $p$ unitization $U_p\big(\kappa^i \xi^j\big)$ as a polynomial in $\xi$ for a certain series $\kappa$, with the exponent $i$ restricted to $\{0,1\}$; here we use the case where the basis of the modular space is given by $\{\xi\}$ as an illustration. However, as shown in Sect.~\ref{sec:zeta}, when there are endless possibilities for the multipliers, we have to extend the consideration of $i$ to infinity, thereby substantially increasing the amount of required $p$-adic analysis. These striking facts distinguish our work from the past literature.

\end{document}